\newcommand{\N}{\mathbb{N}}
\newcommand{\R}{\mathbb{R}} 
\newcommand{\C}{\mathbb{C}} 
\DeclarePairedDelimiterX{\inp}[2]{\langle}{\rangle}{#1, #2}
\DeclareMathOperator{\D}{\textnormal{D}}
\DeclareMathOperator{\M}{\mathcal{M}}
\DeclareMathOperator{\V}{\mathcal{V}}
\renewcommand{\H}{\mathcal{H}}
\DeclareMathOperator{\hess}{\textnormal{Hess}}
\renewcommand{\grad}{\textnormal{grad}}
\DeclareMathOperator{\icov}{\frac{\D}{\dd t}}
\renewcommand{\skew}{\textnormal{skew}}
\renewcommand{\sp}{\textnormal{Sp}}
\DeclareMathOperator{\spst}{\textnormal{SpSt}}
\DeclareMathOperator{\hamil}{\mathfrak{s}\mathfrak{p}}
\DeclareMathOperator{\T}{\textnormal{T}}
\DeclareMathOperator{\cay}{\textnormal{Cay}}
\DeclareMathOperator{\tran}{\mathcal{T}}
\renewcommand*\env@matrix[1][*\c@MaxMatrixCols c]{%
	\hskip -\arraycolsep
	\let\@ifnextchar\new@ifnextchar
	\array{#1}}
\DeclareMathOperator*{\argmin}{arg\,min}
\title{Riemannian optimization on the symplectic Stiefel manifold using second--order information\thanks{Submitted to the editors 04/12 -- 2024.
\funding{This work was funded by the Independent Research Fund Denmark, DFF, grant nr. 3103-00094B}}}
\author{Rasmus Jensen\thanks{SDU Odense
  (\email{rasmusj@imada.sdu.dk}}
\and Ralf Zimmermann \thanks{SDU Odense
  (\email{zimmermann@imada.sdu.dk}}}
\newcommand{\ra}[1]{\renewcommand{\arraystretch}{#1}}
\newcommand{\kibitz}[2]{\ifnum\Comments=1\textcolor{#1}{#2}\fi}
\newcommand{\Nosp}{\textnormal{N}}
\begin{document}

\maketitle
\begin{abstract}
Riemannian optimization is concerned with problems, where the independent variable lies on a smooth manifold.
    There is a number of problems from numerical linear algebra that fall into this category, where the manifold is usually specified by special matrix structures, such as orthogonality or definiteness.
    Following this line of research, we investigate tools for Riemannian optimization on the symplectic Stiefel manifold.
    We complement the existing set of numerical optimization algorithms with a Riemannian trust region method tailored to the symplectic Stiefel manifold. To this end, we derive a matrix formula for the Riemannian Hessian under a right-invariant metric. Moreover, we propose a novel retraction for approximating the Riemannian geodesics. Finally, we conduct a comparative study in which we juxtapose the performance of the Riemannian variants of the steepest descend-, conjugate gradients- and trust region methods on selected matrix optimization problems that feature symplectic constraints.
\end{abstract}

\begin{keywords}
symplectic Stiefel manifold, Riemannian optimization, retraction, trust region method, steepest descend method, nonlinear conjugate gradient method
\end{keywords}

\begin{MSCcodes}
53B20, 22E70, 53B25, 65F15, 53Z05, 70G45, 65P10
\end{MSCcodes}

\section{Introduction}

Riemannian optimization considers problems, where the feasible set $\M\subset \R^{d}$ is a Riemannian manifold. Specifically, constrained problems of the form
\begin{equation}\label{eq:min_prob}
    \min_{x\in \R^d}f(x),  \ \textnormal{such that} \ x\in \M,
\end{equation}
where $f:\R^d\to \R$ is a smooth function, 
formally become unconstrained problems 
\begin{equation}
    \min_{x\in \M}f(x)
\end{equation}
with smooth $f:\M \to \R$, when viewed from the intrinsic perspective of the manifold.
Manifolds have been used in numerical problems for several decades, see \cite{HelmkeMoore1994}.
In numerical linear algebra applications, maps $f$ from subsets of the matrix space $\R^{n\times m}\simeq \R^{nm}$ to $\R$ are of special interest. 
In the seminal paper \cite{Edelman}, the Stiefel and Grassmann manifolds are considered for their use in numerical linear algebra applications. In particular, the authors provide essential geometric tools for implementing Riemannian generalizations of the classical (Euclidean) nonlinear conjugate gradient-- and Newton methods. Since then, a substantial amount of literature has been produced, as well as textbooks; see for example \cite{absil_mahony_sepulchre_08, NBoumal,sato_book}.\\ 
Riemannian optimization techniques can be applied to problems such as the orthogonal Procrustes problem and the symmetric eigenvalue problem \cite{Edelman, absil_rtr_07}, the nonlinear eigenvalue problem \cite{nonlin_eig_newt_15}, low--rank matrix completion \cite{boumal_low_rank_15}, computation of the singular value decomposition \cite{Sato_13_svd}, multivariate statistics \cite{appl_multivar_20} and to obtain low--rank solutions of Lyapunov equations \cite{low_rank_lyapunow_10}. 

Optimization methods can be differentiated according to their order. First-order methods only use gradient information, while second--order methods also leverage the Hessian or suitable approximations of it.
In practice, this is a trade-off: for first-order methods, the individual iterations are usually inexpensive, but the number of iterations until numerical convergence is high. For second--order methods, the computational effort per iteration is comparatively high, but the total iteration count is low, see e.~g.  \cite{Noc_Wri}.

Methods in Riemannian optimization which exploit second--order information include the Newton method and trust-region method \cite{absil_rtr_07,Edelman}. Approximate second--order information is exploited in the nonlinear conjugate gradients method and quasi--Newton methods \cite{Edelman,Huang_broyden_15, Sato_22_article}.

\subsection{Original contributions}
This paper focuses on Riemannian optimization techniques on the symplectic Stiefel manifold  when equipped with a right--invariant Riemannian metric as in \cite{bz_symplectic}.
We complement the existing set of first--order and second--order optimization approaches by transferring the Riemannian trust region method to the manifold under consideration.

To this end, we derive a matrix formula for the Riemannian Hessian for smooth functions on the symplectic Stiefel manifold under the right-invariant metric. 
Moreover, we provide a new retraction based on the associated geodesics.\\
By means of numerical experiments, we juxtapose the Riemannian steepest descend, the Riemannian nonlinear conjugate gradients and the Riemannian trust region.
We consider the latter method using the exact metric Hessian operator and using the projected Euclidean Hessian operator.

We apply the various optimization methods to selected problems in numerical linear algebra.
More precisely, we consider the problems of computing the nearest symplectic matrix to a given matrix \cite{bz_symplectic}, computing the first $k$ symplectic eigenvalues of a symmetric positive definite matrix \cite{Son_21} and of computing the proper symplectic decomposition \cite{bz_symplectic,bendokat_geom_22}. 

\subsection{State of the art}
The symplectic group and the symplectic Stiefel manifold $\spst(2n,2k)$ are Riemannian manifolds that in addition feature the structure of a Lie group ($\sp(2n)$) and that of a quotient space $\spst(2n,2k)$, respectively. The two manifolds have been discussed in the context of Riemannian optimzation in \cite{Gao_2021, bz_symplectic}. Applications involving optimization on the symplectic group and the symplectic Stiefel manifold include structure--preserving model reduction of Hamiltonian systems \cite{Buchfink_22,bendokat_geom_22}, computing symplectic eigenvalues \cite{Son_21}, the averaging of linear optical systems \cite{Fiori_16} and quantum mechanics \cite{sp_quantum_06}.

So far, mainly first--order gradient descend approaches have been utilized \cite{bz_symplectic, Fiori_16, Gao_2021,Son_21}. The conjugate gradient method has been implemented in \cite{Yamada_Sato23}. Newton's method was discussed for the symplectic group in \cite{Birtea20}, but not for the symplectic Stiefel manifold. 
After we send this article to review, the independent preprint \cite{gao2024_newton} appeared. This paper differs from ours in several ways. For example, we use a quotient space representation of the symplectic Stiefel manifold, while \cite{gao2024_newton} treats it directly as an embedded submanifold. We employ the Christoffel formalism to derive the Riemannian Hessian, and the authors of \cite{gao2024_newton} derive the Riemannian Hessian based on a special operator viewpoint in close connection with so--called tractable Riemannian metrics. We, on the other hand, work under a right--invariant metric, and thus the Riemannian structures are not identical between the two works.

\subsection{Organization}
In Section \ref{sec:background}, we recap background theory on the symplectic Stiefel manifold. In Section \ref{sec:optimization}, we present three numerical algorithms for performing optimization, two of them known, one of them new, and discuss a few implementation details. In Section \ref{sec:num_exp}, we conduct numerical experiments, and finally we discuss our findings and conclude the paper in Section \ref{sec:discussion}.

\section{The symplectic Stiefel manifold}
\label{sec:background}
In this section we collect the necessary background theory for the symplectic Stiefel manifold. We complement the existing theory by deriving a representation for its normal space when viewed as a submanifold of $\R^{2n\times 2k}$, and we obtain a matrix formula for the orthogonal projection onto the tangent space. We introduce a new retraction, which is observed to numerically approximate the true geodesics better than the existing ones.
We discuss a low--rank formula for this retraction and discuss the computation of its directional derivative, which will eventually serve as a vector transport. Lastly, we provide a matrix formula for the Riemannian Hessian.

The symplectic Stiefel manifold is the set
\begin{equation}
    \spst(2n,2k)=\qty{U\in\R^{2n\times 2k}: U^TJ_{2n}U=J_{2k} },\ \ J_{2m}=\begin{bmatrix}
        0&I_m\\
        -I_m&0
    \end{bmatrix}. 
\end{equation}
It is a closed embedded smooth manifold in $\R^{2n\times 2k}$ of dimension $(4n-2k+1)k$, as shown for example in \cite{Gao_2021}. Briefly; consider the smooth map $h:\R^{2n\times 2k}\to \skew(2k)$, $U\mapsto h(U)=U^TJ_{2n}U-J_{2k}$, where $\skew(2k)$ denotes the set of all skew symmetric matrices. The set $h^{-1}(0)$ is a closed subset of $\R^{2n\times 2k}$ and the rank of the differential $\D h(U):\R^{2n\times 2k}\to \skew(2k)$ is $k(2k-1)$, since it is surjective onto $\skew(2k)$. This implies that $h$ is a submersion, which verifies the claim \cite{Gao_2021}.

It is not a compact manifold though, as it is unbounded; the diagonal matrix $D_n=\textnormal{diag}\qty{n,n^{-1}}, n\in\N$, is in $\spst(2,2)$, but $[D_n]_{11}$ can grow arbitrarily large with respect to any metric.

The symplectic Stiefel manifold is conveniently expressed as the set 
$$\spst(2n,2k)=\qty{U\in \R^{2n\times 2k}:U^+U=I_{2k}},$$
where $U^+=J_{2k}^TU^TJ_{2n}$ is called the \textit{symplectic inverse}. In the special case $n=k$, we recover the symplectic group $\sp(2n)=\qty{M\in \R^{2n\times 2n}:M^+M=I_{2n}}$. In $\sp(2n)$, the symplectic inverse $M^+$ is contained in $\sp(2n)$ since $ (M^+)^+M^+=MM^+$ and as $M^+M=I_{2n}\Leftrightarrow M M^+M=M$ implies that $MM^+=I_{2n}$. Since it holds for $M,N\in \sp(2n)$ that $(MN)^+MN=I_{2n}$, we have that indeed $\sp(2n)$ is a group. All elements of the symplectic group are invertible, and thus the column vectors of $M\in \sp(2n)$ span $\R^{2n}$. Likewise, elements in the symplectic Stiefel manifold feature full column rank, which follows from the fact that $U^TJ_{2n}U=J_{2k}$.

In \cite{bz_symplectic}, the authors consider the geometry of the symplectic Stiefel manifold under the quotient space representation $\spst(2n,2k)=\sp(2n)/\sp(2(n-k))$. We only provide a short review, and refer to \cite{bz_symplectic} for the details. The reader will find that many results are analogous to the corresponding results on the classical Stiefel manifold, see e.g. \cite{Edelman}. \\
For the symplectic group, the tangent space at $M\in\sp(2n)$ is 
\begin{equation}
    \T_M\sp(2n)=\qty{\Delta\in \R^{2n\times 2n}: \Delta^+M+M^+\Delta=0}, \\ \dim(\T_M\sp(2n))=(2n+1)n.
\end{equation}
Of course, the latter matches the dimension the symplectic group, 
$\dim(\sp(2n))=(2n+1)n$.
We denote the set of Hamiltonian matrices by  
$$\hamil(2n)=\qty{\Omega\in \R^{2n\times 2n}:\Omega^+=-\Omega},$$
and note that again $\dim(\hamil(2n))=(2n+1)n$. 
Taking $\Delta=M\Omega$, with $\Omega\in \hamil(2n)$, we have that $\Omega^++\Omega=0$ holds, showing that $\Delta=M\Omega$ for any $\Omega\in \hamil(2n)$ is a tangent vector. Taking instead $\Delta=\Omega M$ we have that $M^+\Omega^+M+M^+\Omega M=0\Leftrightarrow \Omega^++\Omega=0$, and thus $\Delta=\Omega M$ too is a tangent vector. Therefore,
\begin{align*}
    \T_M\sp(2n)&=\qty{M\Omega:\Omega\in\hamil(2n)}=\qty{\Omega M:\Omega\in\hamil(2n)}.
\end{align*}
Consider the bilinear form $g_M:\T_M\sp(2n)\times \T_M\sp(2n)\to\R, M\in \sp(2n)$
\begin{equation}\label{eq:rie_metric}
    g_M(X_1,X_2)=\frac{1}{2}\tr((X_1M^+)^TX_2M^+).
\end{equation}
This form is positive definite, thus a metric, since for $X=\Omega M\in \T_M\sp(2n)$, $$g_M(X,X)=\frac{1}{2}\tr(\Omega^T\Omega)>0 \quad \forall X\neq 0.$$ The metric $g_M$ varies smoothly with $M$, and it is  therefore a Riemannian metric on $\sp(2n)$. It is right--invariant in the sense that $g_{MN}(X_1N,X_2N)=g_{M}(X_1,X_2)$ for $M,N\in \sp(2n)$. The Riemannian metric induces a norm on the tangent space of $M$, $\|X\|_{M}=\sqrt{(g_M(X,X))}$. We will sometimes write $g$ instead of $g_M$ when the context is clear. 

\subsection{The symplectic Stiefel manifold as a quotient space}

In \cite{bz_symplectic}, the authors show that the quotient space $\sp(2n)/\sp(2(n-k))$ coincides with the symplectic Stiefel manifold $\spst(2n,2k)$. The quotient map $\pi:\sp(2n)\to\spst(2n,2k)$ is given by 
\begin{equation*}
    \pi(M)=ME=U, \ \ E=\begin{bmatrix}[c|c]
        I_k&0_k\\
        \hline
        0_{n-k\times k}&0_{n-k\times k}\\
        \hline
        0_k&I_k\\
        \hline
        0_{n-k\times k}&0_{n-k\times k}.
    \end{bmatrix},
\end{equation*}
in the sense that 
\begin{equation*}
    ME=
    \big[
        M^{(1:k)}\mid M^{(k+1:n-1)}\mid M^{(n:n+k)}\mid M^{(n+k+1:2n)}
    \big]
      E=
    \big[
        M^{(1:k)}\mid M^{(n:n+k)}
    \big]=U.
\end{equation*}
The tangent space of a quotient manifold $\mathcal{N}=\mathcal{M}/\mathcal{Q}$ can be identified with a certain subspace of the tangent space of the total manifold  $\mathcal{M}$.
The construction is as follows. 
Let $x\in \mathcal{N}$ and let $p\in\mathcal{M}$ be such that $\pi(p) = x$. 
Decompose the total tangent space as
\begin{equation}
    \T_p \M=\V_p^{\pi}\M\oplus \H_{p}^{\pi,g}\M,
\end{equation}
where $\V_p^{\pi}\M=\ker \qty(\D\pi(p))$ is the so--called vertical space, and $\D\pi(p)$ is the differential of the quotient map. The orthogonal complement $\H_p^{\pi,g}\M=\qty(\ker \qty(\D\pi(p)))^\perp$ is called the horizontal space.
After fixing a specific $p\in\mathcal{M}$ with $\pi(p)=x$, for any tangent vector $w\in \T_x\mathcal{N}$
there is one and only one horizontal tangent vector $v\in \H_p^{\pi,g}\M$ such that $\D\pi(p)[v] = w$, see   \cite{leesmooth}. Notice that the vertical space is independent of the metric, but the horizontal space is not, since the notion of orthogonality depends on the Riemannian metric $g$.

It is shown in \cite[p. 11]{bz_symplectic} that the horizontal space under the Riemannian metric in \eqref{eq:rie_metric} is 
\begin{equation}\label{eq:horizontal_space}
\begin{array}{rl}
    \H_M^{\pi,g}\sp(2n)&=\qty{\bar\Omega M:\bar\Omega=\Omega P+P\Omega-P\Omega P\in \hamil(2n)}, \\\
    P&=J_{2n}^TUU^+J_{2n}, \quad  U=\pi(M).
\end{array}
\end{equation}
The set of Hamiltonian matrices $\bar \Omega$ satisfying the condition in \eqref{eq:horizontal_space} is denoted by $\H_U\hamil(2n)$
and consequently 
\begin{equation*}
    \T_U\spst(2n,2k)=\qty{\bar \Omega U:\bar \Omega\in \H_U\hamil(2n)}. 
\end{equation*}
For each tangent $\eta\in \T_U\spst(2n,2k)$ and a specific representative $M$ of $U=\pi(M)$, there exists a unique horizontal lift $\eta_{\H_M^{\pi,g}}=\bar \Omega(\eta)M$, where
\begin{equation}
\label{eq:hor_lift_omega_eta}
    \bar\Omega(\eta)=\eta(U^TU)^{-1}U^T+J_{2n}U(U^TU)^{-1}\eta^T(I_{2n}-J_{2n}^TU(U^TU)^{-1}U^TJ_{2n})J_{2n}.
\end{equation}
Consequently, the Riemannian metric on $\T_U\spst(2n)$ is computed as
\begin{equation}\label{eq:rie_metric_spst}
    g_U(X_1,X_2)=\tr(X_1^T(I_{2n}-\frac{1}{2}J_{2n}^TU(U^TU)^{-1}U^TJ_{2n})X_2(U^TU)^{-1}).
\end{equation}
The formula is complicated by the symplectic structure, but is in close analogy to the corresponding formula for the canonical metric on the classical Stiefel manifold.
We refer the reader to \cite[p. 11--12]{bz_symplectic} for the details.
\subsection{Normal space and projections}
When viewing $\sp(2n)$ as an embedded submanifold of $\R^{2n\times 2n}$ endowed with the Riemannian metric \eqref{eq:rie_metric}, we can form the orthogonal complement of $\T_M\sp(2n)\subset \R^{2n\times 2n}$. This yields the  normal space $\Nosp_M\sp(2n)$, i.e., the space of vectors $\eta$ fulfilling $g_M(\Delta,\eta)=0,\forall \Delta\in \T_M\sp(2n)$ \cite[pp. 337]{leesmooth}.
Explicitly, we obtain
\begin{align*}
    \Nosp_M\sp(2n)&=\qty{\eta\in \R^{2n\times 2n}:g_M(\Delta,\eta)=0, \forall \Delta\in \T_M\sp(2n)}\\
    &=\qty{\eta\in \R^{2n\times 2n}:\tr(\Omega^T\eta M^+)=0,\forall \Omega\in \hamil(2n)}.
\end{align*}
Letting $\eta=SJ_{2n}^TM, S\in \skew(2n)$, a computation shows that $\tr(\Omega^T\eta M^+)=0$. Since $\T_M\sp(2n)\oplus \Nosp_M\sp(2n)=\R^{2n\times 2n}$, the dimension of $\Nosp_M\sp(2n)$ is $4n^2-2n^2-n=2n^2-n$, which matches the dimension of $\skew(2n)$, and hence we have 
\begin{lemma}
    For the symplectic group $\sp(2n)$ as an embedded submanifold of $\R^{2n\times 2n}$
    endowed with the right invariant metric \eqref{eq:rie_metric},
    the normal space, i.e., the orthogonal complement of the tangent space at $M\in \sp(2n)$, is 
    \begin{equation}
\Nosp_M\sp(2n)=\qty{\eta=SJ_{2n}^TM:S\in\skew(2n)}.
\end{equation}
Its dimension is $(2n-1)n$.
\end{lemma}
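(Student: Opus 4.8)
The plan is to establish the claimed description of $\Nosp_M\sp(2n)$ in two moves: first prove the inclusion $\{SJ_{2n}^TM:S\in\skew(2n)\}\subseteq\Nosp_M\sp(2n)$ by a direct computation, and then upgrade it to an equality by a dimension count. The discussion preceding the statement already isolates exactly the two facts that need to be filled in, namely that the quoted matrices $\eta$ are orthogonal to every tangent vector, and that the parametrization $S\mapsto SJ_{2n}^TM$ sweeps out a subspace of the correct dimension.

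First I would reduce the orthogonality condition to a trace identity. Writing an arbitrary tangent vector as $\Delta=\Omega M$ with $\Omega\in\hamil(2n)$ and using $MM^+=I_{2n}$, one has $\Delta M^+=\Omega$, so $g_M(\Delta,\eta)=\frac{1}{2}\tr(\Omega^T\eta M^+)$, which for $\eta=SJ_{2n}^TM$ collapses to $\frac{1}{2}\tr(\Omega^TSJ_{2n}^T)$. The main step is then to exploit the (skew-)symmetries hidden in $\hamil(2n)$ and $\skew(2n)$: the Hamiltonian condition $\Omega^+=-\Omega$ is equivalent to $J_{2n}\Omega$ being symmetric, so with $A:=J_{2n}\Omega=A^T$ and the elementary identities $J_{2n}^T=-J_{2n}$, $J_{2n}^2=-I_{2n}$ one obtains $\Omega^T=AJ_{2n}$, whence $\tr(\Omega^TSJ_{2n}^T)=\tr\big(A\,(J_{2n}SJ_{2n}^T)\big)$. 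Since $S$ is skew-symmetric, $B:=J_{2n}SJ_{2n}^T$ is skew-symmetric as well, and the trace of the product of a symmetric matrix with a skew-symmetric matrix vanishes; this yields the desired orthogonality.

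To close the argument I would invoke the non-degeneracy of the ambient form. For any $X\in\R^{2n\times 2n}$ we have $g_M(X,X)=\frac{1}{2}\|XM^+\|_F^2$, which is strictly positive unless $X=0$ because $M^+$ is invertible, so $g_M$ is an honest inner product on the whole of $\R^{2n\times 2n}$ and its orthogonal complement genuinely splits the space, $\T_M\sp(2n)\oplus\Nosp_M\sp(2n)=\R^{2n\times 2n}$; hence $\dim\Nosp_M\sp(2n)=4n^2-(2n+1)n=(2n-1)n$. On the other hand, $S\mapsto SJ_{2n}^TM$ is injective because $J_{2n}^TM$ is invertible, so its image has dimension $\dim\skew(2n)=\binom{2n}{2}=(2n-1)n$; being contained in $\Nosp_M\sp(2n)$ and of the same dimension, it must coincide with it, which is the assertion. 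I do not expect a genuine obstacle here: the only delicate point is keeping the sign bookkeeping with $J_{2n}^T=-J_{2n}$ straight through the trace manipulation, and remembering to use the non-degeneracy of $g_M$ on the full matrix space when passing to dimensions.
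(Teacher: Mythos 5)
Your proposal is correct and follows essentially the same route as the paper: verify that $\eta=SJ_{2n}^TM$ is $g_M$-orthogonal to every tangent vector via the trace identity, then conclude equality by matching dimensions against $\dim\skew(2n)=(2n-1)n$. You in fact supply the details the paper omits (the symmetric-times-skew trace argument behind ``a computation shows'' and the injectivity of $S\mapsto SJ_{2n}^TM$), and the sign bookkeeping with $J_{2n}$ checks out.
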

An orthogonal projection $P_M:\R^{2n\times 2n}\to \T_M\sp(2n)$ is a linear map which satisfies for any $V\in \R^{2n\times 2n}$ (1) $(P_M\circ P_M)(V)=P_M(V)$, (2) $P_M(\Delta)=\Delta, \ \forall \Delta\in \T_M\spst(2n,2n)$, (3) $\ker(P_M)=\Nosp_M\sp(2n)$, (4) $(P_M(V))^+M+M^+P_M(V)=0$ and (5) $g_M(P_M(V),\eta)=0, \ \forall \eta\in \Nosp_M\spst(2n,2n)$, where properties (4) and (5) follows from the others.

Writing $V=V_{T}+V_N$, where $V_T\in \T_M\sp(2n)$ and $V_N\in \Nosp_M\sp(2n)$, we have that $V-P_M(V)=SJ_{2n}^T M, S\in \skew(2n)$. From property (4) and the properties in Lemma \ref{lam:c1} we obtain
\begin{lemma}
The orthogonal projection $P_M:\R^{2n\times 2n}\to \T_M\sp(2n)$, $M\in\sp(2n)$, is given by 
    \begin{equation}\label{eq:proj_sp}
    P_M(V)=V-\frac{1}{2}M(V^+M+M^+V)=\frac{1}{2}V-\frac{1}{2}MV^+M.
\end{equation}

\end{lemma}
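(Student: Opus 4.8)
The plan is to exploit the orthogonal splitting $\R^{2n\times 2n}=\T_M\sp(2n)\oplus\Nosp_M\sp(2n)$ directly. Any $V\in\R^{2n\times 2n}$ decomposes uniquely as $V=V_T+V_N$ with $V_T\in\T_M\sp(2n)$ and $V_N\in\Nosp_M\sp(2n)$, and then $P_M(V)=V_T$ is the orthogonal projection by definition; so it suffices to determine $V_N$ in closed form. By the normal-space formula just established, $V_N=SJ_{2n}^TM$ for some $S\in\skew(2n)$, and the requirement $V-V_N\in\T_M\sp(2n)$ is precisely the relation in property~(4), namely $(V-V_N)^+M+M^+(V-V_N)=0$.

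First I would expand this relation. From $X^+=J_{2n}^TX^TJ_{2n}$ one gets $(SJ_{2n}^TM)^+=-M^+SJ_{2n}$; substituting and using $J_{2n}^T=-J_{2n}$ together with $M^+M=MM^+=I_{2n}$ (both valid for $M\in\sp(2n)$), the condition collapses to $2M^+SJ_{2n}M=-(V^+M+M^+V)$. Left-multiplying by $M$ and right-multiplying by $M^+=M^{-1}$, and then using $J_{2n}^TM=-J_{2n}M$ and $M^+M=I_{2n}$ once more, I obtain $V_N=SJ_{2n}^TM=\frac{1}{2}M(V^+M+M^+V)$. Hence $P_M(V)=V-\frac{1}{2}M(V^+M+M^+V)$, and the second form follows at once from $\frac{1}{2}MM^+V=\frac{1}{2}V$, giving $P_M(V)=\frac{1}{2}V-\frac{1}{2}MV^+M$.

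To make the argument airtight I would then verify, rather than only derive, the two membership claims. For $P_M(V)\in\T_M\sp(2n)$ it is a one-line computation: using $(MV^+M)^+=M^+VM^+$ and $M^+M=I_{2n}$ one checks $(P_M(V))^+M+M^+P_M(V)=\frac{1}{2}(V^+M-M^+V)+\frac{1}{2}(M^+V-V^+M)=0$. For $V-P_M(V)=\frac{1}{2}(V+MV^+M)\in\Nosp_M\sp(2n)$ I would set $S:=(V-P_M(V))M^+J_{2n}=\frac{1}{2}(VM^+ +MV^+)J_{2n}$ (using $MM^+=I_{2n}$ and $(J_{2n}^T)^{-1}=J_{2n}$) and show $S^T=-S$; then $SJ_{2n}^TM=V-P_M(V)$ exhibits the difference in the required normal form. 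Uniqueness of the orthogonal splitting then yields properties~(1)--(3), so $P_M$ is indeed the orthogonal projection.

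I expect the only step that is not pure bookkeeping to be the skew-symmetry of $S$. The efficient route is the transpose identity $X^T=J_{2n}X^+J_{2n}^T$, valid for every $X\in\R^{2n\times 2n}$, combined with $(AB)^+=B^+A^+$, $(A^+)^+=A$ and linearity of the symplectic inverse: these give $(VM^+ +MV^+)^T=J_{2n}\bigl((VM^+)^+ +(MV^+)^+\bigr)J_{2n}^T=J_{2n}(MV^+ +VM^+)J_{2n}^T$, and then $J_{2n}^TJ_{2n}=I_{2n}$ and $J_{2n}^T=-J_{2n}$ collapse $S^T$ to $-S$. Everything else — the expansions of $(\cdot)^+$, and the cancellations via $M^+M=MM^+=I_{2n}$ and $J_{2n}^2=-I_{2n}$ — is routine.
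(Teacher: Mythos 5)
Your proposal is correct and follows essentially the same route as the paper: decompose $V=V_T+V_N$ with $V_N=SJ_{2n}^TM$, $S\in\skew(2n)$, and use the tangency condition $(V-V_N)^+M+M^+(V-V_N)=0$ together with $M^+M=MM^+=I_{2n}$ to solve for $V_N$, then verify the membership claims. You simply spell out the algebra (including the skew-symmetry of $S$) that the paper leaves to the reader.
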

One can verify that properties (1) -- (3) and (5) are indeed satisfied. 

For $U\in \spst(2n,2k)$  and $\T_U\spst(2n,2k)$ endowed with the right--invariant metric \eqref{eq:rie_metric_spst}, we proceed along similar lines and obtain
\begin{lemma}
    For the symplectic Stiefel manifold $\spst(2n,2k)$ endowed with the right invariant metric \eqref{eq:rie_metric}, the normal space, i.e., the orthogonal complement of the tangent space at $U\in \spst(2n,2k)$, is
    \begin{equation}
        \Nosp_U\spst(2n,2k)=\qty{\eta =J_{2n}UT(U^TU):T\in \skew(2k)}.
    \end{equation}
    Its dimension is $(2k-1)k$. The orthogonal projection onto $\T_U\spst(2n,2k)$ is 
    \begin{equation}\label{eq:proj_spst}
        P_U(V)=V-\frac{1}{2}(U^T)^+((U^TU)^{+})^{-1}(V^+U+U^+V),
    \end{equation}
\end{lemma}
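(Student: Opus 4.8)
The plan is to follow the same two-step template used above for $\sp(2n)$: first pin down the normal space by exhibiting orthogonal vectors and counting dimensions, then read off the projection by splitting an arbitrary $V\in\R^{2n\times 2k}$ into its tangent and normal components, now working with rectangular $U$ and the induced metric \eqref{eq:rie_metric_spst}. First I would record the working descriptions of the tangent space coming from the submersion $h$, namely $\Delta\in\T_U\spst(2n,2k)$ iff $\Delta^TJ_{2n}U+U^TJ_{2n}\Delta=0$, equivalently $\Delta^+U+U^+\Delta=0$, equivalently $\Delta^TJ_{2n}U\in\sym(2k)$. I would also note that \eqref{eq:rie_metric_spst} defines an inner product on all of $\R^{2n\times 2k}$, not merely on $\T_U\spst(2n,2k)$: it has the form $\tr(X_1^T A X_2 B)$ with $B=(U^TU)^{-1}$ symmetric positive definite and $A=I_{2n}-\frac{1}{2}J_{2n}^TU(U^TU)^{-1}U^TJ_{2n}$, and $A$ is symmetric positive definite because $J_{2n}^TU(U^TU)^{-1}U^TJ_{2n}$ is the orthogonal projector onto $\colspan(J_{2n}^TU)$ (using $J_{2n}J_{2n}^T=I_{2n}$), so $A$ has eigenvalues $1$ and $\frac{1}{2}$. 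Hence ``normal space'' unambiguously means the $g_U$-orthogonal complement of $\T_U\spst(2n,2k)$ in $\R^{2n\times 2k}$, and the dimension count below is legitimate.

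For the normal space, I would verify the inclusion ``$\supseteq$'' by a direct computation: for $\eta=J_{2n}UT(U^TU)$ with $T\in\skew(2k)$ the factor $(U^TU)^{-1}$ in \eqref{eq:rie_metric_spst} cancels so that $\eta(U^TU)^{-1}=J_{2n}UT$, and using $J_{2n}^2=-I_{2n}$ and $J_{2n}^T=-J_{2n}$ the weighting collapses, $A J_{2n}U=\frac{1}{2}J_{2n}U$, leaving $g_U(\Delta,\eta)=\frac{1}{2}\tr(\Delta^TJ_{2n}U\,T)$, which vanishes since $\Delta^TJ_{2n}U$ is symmetric and $T$ is skew. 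For equality I would observe that $T\mapsto J_{2n}UT(U^TU)$ is injective ($J_{2n}$ and $U^TU$ invertible, $U$ of full column rank), so the displayed set is a subspace of the normal space of dimension $\dim\skew(2k)=(2k-1)k$; since $\dim\T_U\spst(2n,2k)=(4n-2k+1)k$ and $(4n-2k+1)k+(2k-1)k=4nk=\dim\R^{2n\times 2k}$, the normal space has exactly dimension $(2k-1)k$, so the inclusion is an equality.

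For the projection, I would write $V=P_U(V)+V_N$ with $V_N=J_{2n}UT(U^TU)\in\Nosp_U\spst(2n,2k)$ and determine $T$ from the requirement that $V-V_N$ be tangent, i.e. $(V-V_N)^+U+U^+(V-V_N)=0$. A short computation with the symplectic inverse, using $\big(J_{2n}UT(U^TU)\big)^+=-J_{2k}^T(U^TU)T\,U^T$ and $J_{2n}^2=-I_{2n}$, gives $V_N^+U+U^+V_N=-2J_{2k}^T(U^TU)T(U^TU)$, so the condition becomes $(U^TU)T(U^TU)=-\frac{1}{2}J_{2k}(V^+U+U^+V)$, solved by $T=-\frac{1}{2}(U^TU)^{-1}J_{2k}(V^+U+U^+V)(U^TU)^{-1}$. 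One must check that this $T$ is skew-symmetric, which reduces to the identity $J_{2k}(V^+U+U^+V)=V^TJ_{2n}U+U^TJ_{2n}V$, the right-hand side of which is visibly skew. Substituting $T$ back into $P_U(V)=V-J_{2n}UT(U^TU)$ and rewriting the factor $J_{2n}U(U^TU)^{-1}J_{2k}$ via $(U^T)^+=-J_{2n}UJ_{2k}$ and $((U^TU)^+)^{-1}=-J_{2k}(U^TU)^{-1}J_{2k}$ yields \eqref{eq:proj_spst}. Finally I would confirm the defining properties of an orthogonal projector — idempotency, restriction to the identity on $\T_U\spst(2n,2k)$, and kernel equal to $\Nosp_U\spst(2n,2k)$ — all of which follow at once from the splitting $V=(V-V_N)+V_N$.

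The computational heart is routine but sign-sensitive, and the main obstacle is the bookkeeping with $J_{2n}$, $J_{2k}$ and the symplectic inverse $(\cdot)^+$: specifically, proving that the solved-for $T$ is genuinely skew-symmetric and recasting $J_{2n}U(U^TU)^{-1}J_{2k}$ into the compact form $(U^T)^+((U^TU)^+)^{-1}$, where a single lost sign or a swapped $J_{2k}^T$ versus $J_{2k}$ would ruin the match with \eqref{eq:proj_spst}. A secondary point not to be skipped is the positive-definiteness of the weighting matrix $A$, without which the orthogonal-complement and dimension-counting steps would not be justified.
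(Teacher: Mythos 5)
Your proposal is correct and follows essentially the same route the paper indicates (it only sketches this lemma by saying one ``proceeds along similar lines'' as for $\sp(2n)$): exhibit the candidate normal vectors $J_{2n}UT(U^TU)$, verify $g_U$-orthogonality using the symmetry of $\Delta^TJ_{2n}U$, match dimensions via $(4n-2k+1)k+(2k-1)k=4nk$, and solve the tangency condition $(V-V_N)^+U+U^+(V-V_N)=0$ for the skew parameter $T$ to read off \eqref{eq:proj_spst}. Your added checks (positive definiteness of the weight matrix and skewness of the solved-for $T$) are details the paper leaves implicit, and your final expression agrees with \eqref{eq:proj_spst} after rewriting $-J_{2n}U(U^TU)^{-1}J_{2k}$ as $(U^T)^+((U^TU)^+)^{-1}$.
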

Observe that for $n=k$, the projection \eqref{eq:proj_spst} reduces to the projection in \eqref{eq:proj_sp}.
\subsection{Riemannian gradient of a scalar function on \texorpdfstring{$\spst(2n,2k)$}{SpSt(2n,2k)}} 

Given a smooth map $f:\spst(2n,2k)\to \R$, we have the defining relationship for the Riemannian gradient
\begin{equation}\label{eq:df_inp_rep}
    \D f(x)[v]=g(\grad f(x),v)=\inp{\nabla \bar f(x)}{v} =\D \bar f(x)[v],
\end{equation}
for a smooth extension $\bar f:\R^{2n\times 2k}\to \R$ of $f$, \cite[Definition 3.58]{NBoumal}. Here $\nabla \bar f(x)$ denotes the Euclidean gradient of $\bar f$ and $\inp{\cdot}{\cdot}$ denotes the canonical inner product on $\R^{2n\times 2k}$ $\inp{A}{B}=\tr(A^TB)$.  
In our applications, the smooth extension $\bar f$ can always be taken as $f$ with the input domain extended to all of $\R^{2n\times 2k}$, or at least to an open subset containing $\spst(2n,2k)$. Writing $\nabla f\equiv \nabla \bar f$, one can show that the Riemannian gradient is \cite{bz_symplectic}
\begin{equation}\label{eq:grad_spst}
    \grad f(U)=\nabla f(U) U^TU+J_{2n}U\nabla f(U)^TJ_{2n}U.
\end{equation}

\subsection{Geodesics and retractions}\label{sec:geod_ret}

In \cite{bz_symplectic}, the authors obtain the geodesics on $\spst(2n,2k)$ under the quotient space construction as the images of the horizontal geodesics on the symplectic group $\sp(2n)$: Given $M\in\sp(2n)$ and $\eta\in \T_M\sp(2n)$ the associated geodesic in the total space is
\begin{equation}\label{eq:geod_sp}
    \gamma(t)=\exp(t(\eta M^+-(\eta M^+)^T))\exp(t(\eta M^+)^T)M.
\end{equation}
If $\eta\in \H_M^{\pi,g}\sp(2n)$, then $\gamma'(t)\in \H_{\gamma(t)}^{\pi,g}\sp(2n)$ for all $t$, meaning that we have \textit{horizontal geodesics}. 
By a standard result on quotient spaces \cite[Corollary 7.46]{O_neil}, horizontal geodesics are mapped to geodesics in the quotient space under the quotient map $\pi$. Consequently, the geodesics on $\spst(2n,2k)$ are $\pi(\gamma(t))$. Given $U=\pi(M)$ and $\Delta\in \T_U\spst(2n,2k)$, one obtains
\begin{equation}\label{eq:geod_spst}
    \varphi(t)=\exp(t(\bar \Omega(\Delta)-\bar \Omega(\Delta)^T))\exp(t\bar \Omega(\Delta)^T)U.
\end{equation}
This follows from computing the horizontal lift of $\Delta$, $\Delta_{\H_M}^{\pi,g}=\bar \Omega(\Delta)M$, inserting in \eqref{eq:geod_sp} and composing with $\pi$.

The formulas \eqref{eq:geod_sp} and \eqref{eq:geod_spst} are not well suited for practical applications, because the matrix exponential of a Hamiltonian matrix is unstable.  In Riemannian optimization, it is standard practice to employ retractions rather than using the geodesics. A retraction is a map 
\begin{equation}
    R:\M\times \T\M\to \M, (x,v)\mapsto R_x(v),
\end{equation}
so that (1) $R_x(0)=x$ and  (2) $\D R_x(0)[\xi]=\xi$, $\forall \xi\in \T_x\M$ \cite[Definition 3.47]{NBoumal}. We can view retractions as first--order approximations of the Riemannian exponential map. By employing the following approximation 
\begin{equation}
    \exp(2X)\approx (I+X)(I-X)^{-1},
\end{equation}
we recover the so-called \textit{Cayley transformation} \cite[pp. 128]{HW}
\begin{equation}
    \cay(X)=(I+X)(I-X)^{-1},
\end{equation}
which has the property that $\cay:\hamil(2n)\to \sp(2n)$ \cite{bz_symplectic}. Replacing the matrix exponentials in \eqref{eq:geod_spst} with the Cayley transformation yields the following \textit{Cayley retraction} 
\begin{equation}\label{eq:main_ret}
    R_U(\Delta)=\cay\qty(\frac{1}{2}\qty(\bar \Omega(\Delta)-\bar \Omega(\Delta)^T))\cay\qty(\frac{1}{2}\bar \Omega(\Delta)^T)U.
\end{equation}
To verify that this indeed is a retraction we note that $\bar \Omega(0)=0$ and thus $\cay(0)=I$, so indeed $R_U(0)=U$. Computing $\D R_x(0)[\xi]$ yields
\begin{align*}
    \D R_U(0)[\xi]&=\eval{\dv{x}R_U(0+x\xi)}_{x=0}\\
    &=\D \cay(0)\qty[\frac{1}{2}(\bar\Omega(\xi)-\bar\Omega(\xi)^T)]U+\D\cay(0)\qty[\frac{1}{2}\bar\Omega(\xi)^T]U.
\end{align*}
From the following calclation, using the fact that  $\D (I-X)[Y]=-Y$ and because $(I+X)$ commutes with $(I-X)^{-1}$
\begin{align}\label{eq:diff_cay}
    \D ((I-X)^{-1}(I+X))[Y]&=-(I-X)^{-1}(-Y)(I-X)^{-1}(I+X)+(I-X)^{-1}Y\notag\\
    &=(I-X)^{-1}\qty(Y(I+X)+Y(I-X))(I-X)^{-1}\notag \\
    &=2(I-X)^{-1}Y(I-X)^{-1}.
\end{align}
we readily obtain $\D R_U(0)[\xi]=\xi$.

The computation of $\cay(\bar \Omega(\Delta)^T)$ as well as that of $\cay(\bar \Omega(\Delta)-\bar \Omega(\Delta)^T)$ requires inversion of a $2n\times 2n$ matrix. To avoid this, we proceed in a similar manner as in \cite{bz_symplectic} and \cite{Gao_2021} and obtain an efficient formula for the retraction. We mention that the work \cite{Oviedo_2023} contains theory on a general family of efficient retractions, but to the best of our knowledge, the retraction in \eqref{eq:main_ret} does not fit in the proposed family. 

Without further comments we use the result appearing in \cite[p. 13]{bz_symplectic}  that every tangent vector $\Delta\in \T_U\spst(2n,2k)$ can be written as $\bar \Delta=UA+H$ where 
\begin{equation*}
A=J_{2k}U^T\Delta(U^TU)^{-1}J_{2k}+(U^TU)^{-1}\Delta^TU-(U^TU)^{-1}\Delta^TJ_{2n}^TU(U^TU)^{-1}J_{2k}
\end{equation*}
\begin{equation*}
    H=(I_{2n}-UU^+)J_{2n}\Delta(U^T U)^{-1}J_{2k}.
\end{equation*}
Having the two components defined above, we can calculate, for $\Delta=\bar \Omega(\Delta)U$, the matrix $\bar \Omega(\Delta)$, as 
\begin{equation*}
    \bar \Omega(\Delta)=\begin{bmatrix}
        J_{2n}^TUJ_{2k}&(\bar\Delta^+\qty(I_{2n}-\frac{1}{2}UU^+))^T
    \end{bmatrix}\begin{bmatrix}
        \bar \Delta^T\qty(I_{2n}-\frac{1}{2}UU^+)^T\\
        -U^T
    \end{bmatrix}=: YX^T,
\end{equation*}
where $X,Y\in \R^{2n\times 4k}$. Furthermore, by setting $\hat X=\begin{bmatrix}
    Y&-X
\end{bmatrix}$ and $\hat Y=\begin{bmatrix}
    X&Y
\end{bmatrix}$, we have that $\bar\Omega(\Delta)-\bar\Omega(\Delta)^T=\hat X\hat Y^T$. 

From the construction above, we can mimic the ideas presented in \cite{bz_symplectic} leading to low--rank formulas for computing the retraction \eqref{eq:main_ret}. We refer the reader to Appendix \ref{sec:appendixB} for the derivation.  One obtains
\begin{equation}\label{eq:eff_ret}
    \begin{aligned}
    R_U(t\Delta)=&\qty(I_{2n}+t\hat X\qty(I_{8k}-\frac{t}{2}\hat Y^T\hat X)^{-1}\hat Y^T)\\
    &\cdot \qty(-U+(tH+2U)\qty(I_{2k}-\frac{t}{2}A+\frac{t^2}{4}H^+H)^{-1}).
\end{aligned}
\end{equation}
Notice that the formula \eqref{eq:eff_ret} is only efficient when $4k <n$.

One can also consider the (simpler) retraction \cite{bz_symplectic}
\begin{equation}\label{eq:simple_ret}
    \tilde R_U(t\Delta)=\cay\qty(\frac{t}{2}\bar\Omega(\Delta))U,
\end{equation}
which also allows for efficient evaluation due to the result in Equation  \eqref{eq:efficient_cay_1}. A reason for choosing $\tilde R$ over $R$ is that it is computationally cheaper to evaluate. However, $R$ is a better approximation of the true geodesics. This is confirmed by a numerical experiment presented in Figure \ref{fig:num_exp_geod_ret}, and therefore we will use \eqref{eq:eff_ret} as the retraction in our numerical experiments. 
\begin{figure}[!ht]
    \centering
    \includegraphics[scale = 0.4]{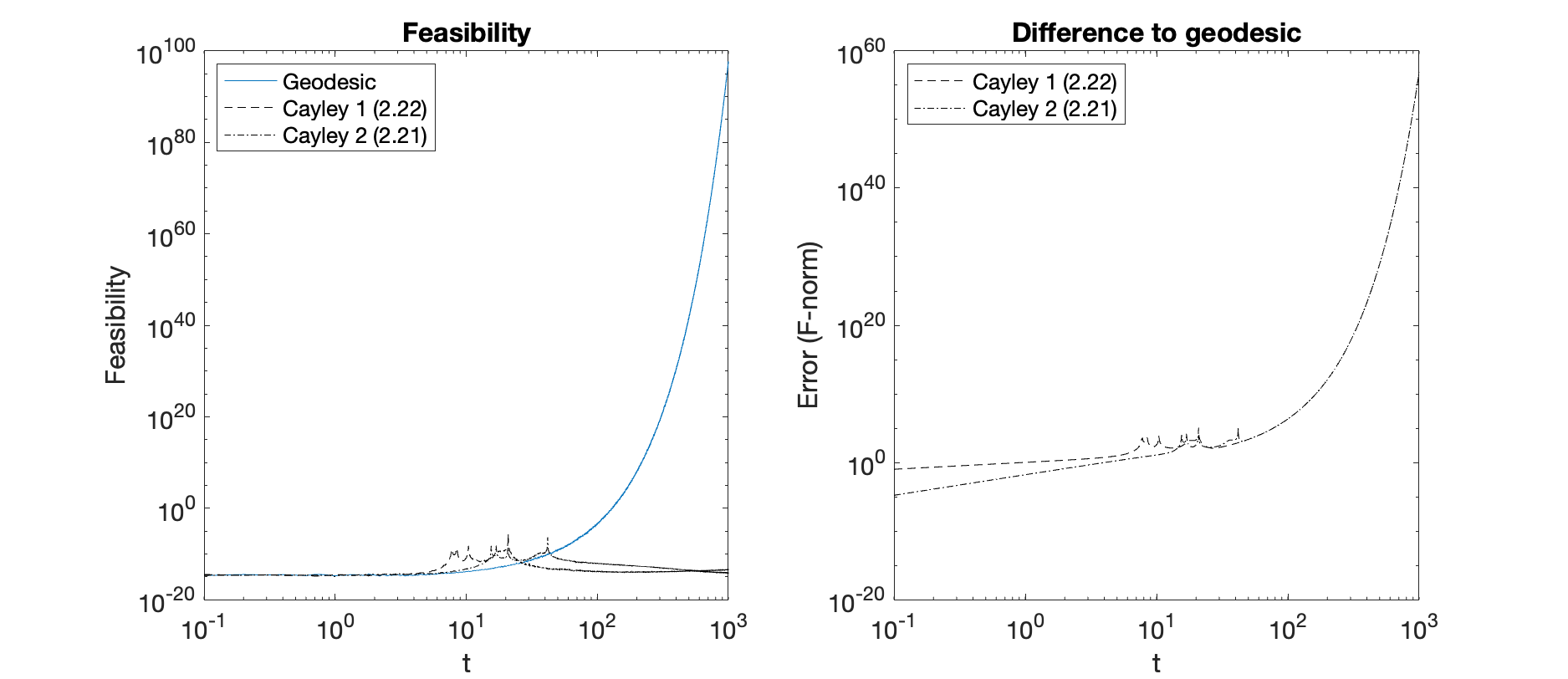}
    \caption{Numerical experiment with random point $X\in \spst(2n,2k)$ and random $\Delta\in \T_X\spst(2n,2k)$. Here $n = 100$ and $k = 10$. Left: Feasibility quantified by $\|\gamma(t)^+\gamma(t)-I_{2k}\|_F$ for the three different methods of moving on $\spst(2n,2k)$. The geodesic is computed according to \eqref{eq:geod_spst}, Cayley 1 is \eqref{eq:simple_ret} and Cayley 2 is \eqref{eq:eff_ret}.  Clearly, the computed geodesic does not remain on the manifold for large $t$. Right: The error measured in the Frobenious norm of the difference between the geodesic and the two retractions respectively.  
    } \label{fig:num_exp_geod_ret} 
\end{figure}
\begin{figure}[!ht]
    \ContinuedFloat
    \captionsetup{list=off,format=cont}

    \includegraphics[scale = 0.4]{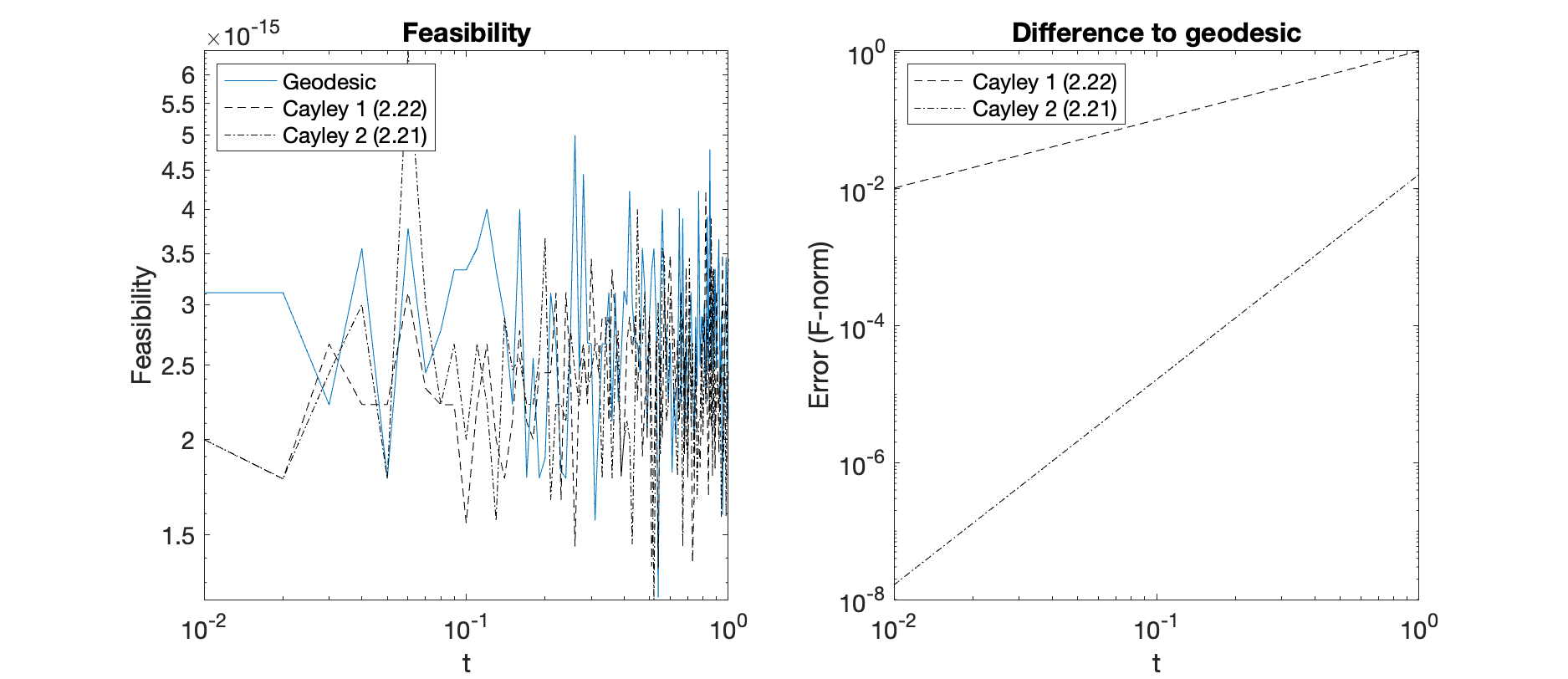}
    \caption{Left: Continuation of the numerical experiment in Figure \ref{fig:num_exp_geod_ret} on $\spst(2n,2k), n=100,k=10$. Zooming in on the interval $[0.01,1]$ for $t$, both Caley retractions stay on the symplectic Stiefel manifold up to numerical errors of the order $10^{-15}$. Right: The retraction \eqref{eq:eff_ret} is approximating the true geodesic up to an error that is several orders of magnitude lower than that of \eqref{eq:simple_ret}. }
    \label{fig:num_exp_geod_ret_2} 
\end{figure}
\subsection{Vector transport on \texorpdfstring{$\spst(2n,2k)$}{SpSt(2n,2k)}} 

Moving tangent vectors between tangent spaces of a Riemannian manifold $\M$ requires a so--called vector transport \cite{sato_book}
\begin{equation*}
    \tran:\T\M\times \T\M\to \T\M.
\end{equation*}
Vector transport can be considered as a first--order approximation to the Riemannian parallel transport \cite[Chapter 2.3]{docarmo92} analogous to retractions being first--order approximations of the Riemannian exponential map.
Vector transport is only preferred over Riemann parallel transport in applications if the associated computations are either cheaper or more stable.\\
Given a retraction $R$, a point $x\in \M$ and three tangent vectors $\eta,\xi,\phi\in \T_x\M$, a vector transport must satisfy  \cite[Definition 10.68]{NBoumal}
\begin{enumerate}
    \item $\tran_\eta(\xi)\in \T_{R_x(\eta)}\M$ (The vector $\xi$ is transported to a vector determined by the direction $\eta$ and the retraction $R_x$),
    \item $\tran_{0_x}(\xi)=\xi$ (The vector $\xi$ is left unchanged whenever $R_x(0_x)=x$) and 
    \item $\tran_\eta(a\xi+b\phi)=a\tran_\eta(\xi)+b\tran_\eta(\phi)$ (linearity)
\end{enumerate}
A standard way to construct a vector transport is to use the \textit{differential of a retraction} \cite{NBoumal}
\begin{equation*}
    \D R_x(\eta):\T_x\M\to \T_{R_x(\eta)},\  \xi\to \D R_x(\eta)[\xi].
\end{equation*}
Alternatively, the so--called \textit{inverse retraction} \cite{sato_zhu} can be utilized, if available. 
We can also employ the orthogonal projections \eqref{eq:proj_sp} and \eqref{eq:proj_spst}; given a point $x\in \spst(2n,2k)$, $P:\R^{2n\times 2k}\to \T_x\spst(2n,2k)$, which yields condition 1 in the definition of a vector transport, $P(V)=V$ if $V\in \T_x\spst(2n,2k)$, which is condition 2.
Finally, since projections are linear, condition 3 is also satisfied. The Riemannian parallel transport preserves the norm of the vector that is transported. Any vector transport can be enhanced to feature this property by a suitable scaling.

\begin{lemma}\label{lem:diff_ret}
    The directional derivative of the Cayley retraction \eqref{eq:main_ret} is 
    \begin{equation}
        \begin{aligned}
    \D R_U(t\eta)[\xi]=&\qty(I_{2n}-\Omega_1(\eta))^{-1}\Omega_1(\xi)\qty(I_{2n}-\Omega_1(\eta))^{-1}\cay(\Omega_2(\eta))U\\
    &+\cay\qty(\Omega_1(\eta))(I_{2n}-\Omega_2(\eta))^{-1}\Omega_2(\xi)(I_{2n}-\Omega_2(\eta))^{-1}U,
\end{aligned}
    \end{equation}
where $\Omega_1(\eta)=\frac{t}{2}\qty(\Omega(\eta)-\Omega(\eta)^T)$, $\Omega_2(\eta)=\frac{t}{2}(\Omega(\eta))^T$, $\Omega_1(\xi)=\frac{1}{2}\qty(\Omega(\xi)-\Omega(\xi)^T)$ and $\Omega_2(\xi)=\frac{1}{2}(\Omega(\Delta))^T$
\end{lemma}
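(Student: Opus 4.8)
The plan is to differentiate the two--factor product in \eqref{eq:main_ret} term by term, using that the horizontal--lift coefficient $\bar\Omega(\cdot)$ (written $\Omega(\cdot)$ in the statement) is \emph{linear} in its tangent--vector argument, which is visible directly from the closed form \eqref{eq:hor_lift_omega_eta}. I would introduce the shorthands $\Omega_1(\Delta):=\tfrac12\bigl(\bar\Omega(\Delta)-\bar\Omega(\Delta)^T\bigr)$ and $\Omega_2(\Delta):=\tfrac12\bar\Omega(\Delta)^T$, so that \eqref{eq:main_ret} reads $R_U(\Delta)=\cay\bigl(\Omega_1(\Delta)\bigr)\cay\bigl(\Omega_2(\Delta)\bigr)U$ and both $\Omega_1,\Omega_2$ inherit linearity in $\Delta$. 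Consequently, along the segment $s\mapsto t\eta+s\xi$ one has $\Omega_i(t\eta+s\xi)=\Omega_i(t\eta)+s\,\Omega_i(\xi)$, which with the abbreviations of the statement (where the factor $t$ is absorbed into the ``$\eta$'' quantities) reads $\Omega_1(\eta)+s\,\Omega_1(\xi)$ in the first Cayley slot and $\Omega_2(\eta)+s\,\Omega_2(\xi)$ in the second.

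First I would apply the ordinary product rule to $s\mapsto \cay\bigl(\Omega_1(\eta)+s\,\Omega_1(\xi)\bigr)\,\cay\bigl(\Omega_2(\eta)+s\,\Omega_2(\xi)\bigr)\,U$ at $s=0$, keeping the differentiated and surviving factors in their (non--commuting) matrix order:
\[
\D R_U(t\eta)[\xi]
=\Bigl(\tfrac{d}{ds}\big|_{s=0}\cay\bigl(\Omega_1(\eta)+s\,\Omega_1(\xi)\bigr)\Bigr)\cay\bigl(\Omega_2(\eta)\bigr)U
+\cay\bigl(\Omega_1(\eta)\bigr)\Bigl(\tfrac{d}{ds}\big|_{s=0}\cay\bigl(\Omega_2(\eta)+s\,\Omega_2(\xi)\bigr)\Bigr)U .
\]
Each inner derivative is then evaluated with the chain rule and the Cayley differential already recorded in \eqref{eq:diff_cay}, namely $\D\cay(X)[Y]=2(I-X)^{-1}Y(I-X)^{-1}$ (which followed there from $\cay(X)=(I+X)(I-X)^{-1}$ and the commutation of $I+X$ with $(I-X)^{-1}$). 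This gives $\tfrac{d}{ds}\big|_{s=0}\cay\bigl(\Omega_1(\eta)+s\,\Omega_1(\xi)\bigr)=2(I_{2n}-\Omega_1(\eta))^{-1}\Omega_1(\xi)(I_{2n}-\Omega_1(\eta))^{-1}$, and the analogous identity with $\Omega_2$ in place of $\Omega_1$. Substituting into the product--rule expression, using once more that $\cay$ may be written as either $(I+X)(I-X)^{-1}$ or $(I-X)^{-1}(I+X)$, and collecting (while tracking the scalar constants coming from \eqref{eq:diff_cay} and from the $\tfrac12$'s hidden in $\Omega_1,\Omega_2$) yields the stated formula. The computation is valid wherever the retraction itself is defined, i.e.\ wherever $1$ is not an eigenvalue of $\Omega_1(\eta)$ or of $\Omega_2(\eta)$.

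I do not anticipate a genuine obstacle: the argument is the product rule plus the chain rule applied to a product of three matrix--valued factors, and everything needed is already in the excerpt. The points requiring care are (i) the linearity of $\bar\Omega$, hence of $\Omega_1,\Omega_2$, which licenses the clean splitting of the directional derivative and which is immediate from \eqref{eq:hor_lift_omega_eta}; (ii) the left/right placement of the differentiated factor relative to the surviving $\cay$ factor and the trailing $U$, since these matrices do not commute; and (iii) the bookkeeping of the scalar constants from \eqref{eq:diff_cay}. As a consistency check I would finally note that specializing to $t=0$ reproduces $\D R_U(0)[\xi]=\xi$, which is exactly the retraction--property verification already carried out right after \eqref{eq:main_ret}.
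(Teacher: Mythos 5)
Your proposal is correct and follows essentially the same route as the paper's proof: apply the product rule to the two Cayley factors of \eqref{eq:main_ret}, use the linearity of $\bar\Omega(\cdot)$ to split off the direction $\xi$, and evaluate each inner derivative with the Cayley differential \eqref{eq:diff_cay}. One caveat on your final bookkeeping: the factor $2$ from \eqref{eq:diff_cay} does not cancel against the $\tfrac12$'s hidden in $\Omega_1(\xi),\Omega_2(\xi)$ as stated, so your own $t=0$ consistency check would return $\tfrac12\xi$ rather than $\xi$ for the formula exactly as written --- this points to a factor-of-two slip in the displayed constants (present in the statement itself) rather than any flaw in your method.
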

\begin{proof}
    For the retraction in \eqref{eq:main_ret}, we obtain 
    \begin{align*}
        \D R_U(t\eta)[\xi]=&\D \cay\qty(\frac{t}{2}(\Omega(\eta)-\Omega(\eta)^T))[\xi]\cay\qty(\frac{t}{2}\Omega(\eta)^T)U\\
        &+\cay\qty(\frac{t}{2}(\Omega(\eta)-\Omega(\eta)^T))\D\cay\qty(\frac{t}{2}\Omega(\eta))[\xi]U,
    \end{align*}
    which can be computed using the formula in \eqref{eq:diff_cay}, 
\end{proof}

The formula in Lemma \ref{lem:diff_ret} is costly to evaluate, but the effort can be reduced by leveraging the low-rank structure and applying the techniques used in Appendix \ref{sec:appendixB}. Moreover, if one first computes the retraction $R_U(t\eta)$ and then subsequently calculates the vector transport $\D R_U(t\eta)[\eta]$, then one can recycle various components. This is exactly the case for the Riemannian conjugate gradients method, and thus we make use of this recycling of computed quantities in our numerical experiments.

\subsection{The Riemannian Hessian of a scalar function on \texorpdfstring{$\spst(2n,2k)$}{SpSt(2n,2k)}}\label{sec:rie_hess}
Computing the Riemannian Hessian of a function $f:\M\to \R$ can be done by computing the covariant derivative  of the Riemannian gradient $\grad f$, along a geodesic $\varphi(t)$. Alternatively, one can efficiently read off the so--called Christoffel form (a bilinear 2--1 tensor) from the geodesic differential equation, and use this in turn to compute the Riemannian Hessian. We will follow that path, and refer the reader to Appendix \ref{sec:appendix} for a brief summary of the theory. We derive the Riemannian Hessian for a scalar function on the symplectic Stiefel manifold under the Riemannian metric $g$ in \eqref{eq:rie_metric} (and, as an aside, for the symplectic group as well).

From the geodesic formula in \eqref{eq:geod_spst} we have, by setting $\Omega:=\bar\Omega(\Delta)$
\begin{align*}
    \varphi'(t)=&(\Omega-\Omega^T)\exp(t(\Omega-\Omega^T))\exp(t\Omega^T)U\\
    &+\exp(t(\Omega-\Omega^T))\Omega^T\exp(t\Omega^T)U,\\
    \Rightarrow \varphi''(0)=&(\Omega-\Omega^T)(\varphi'(0)+\Omega^TU)+(\Omega^T)^2U.
\end{align*}
By \eqref{eq:geod_christ_rel} in the appendix, this yields the Christoffel symbols, and consequently the Riemannian Hessian. 
\begin{lemma}
\label{lem:ChristoffelForm}
Let $U\in \spst(2n,2k)$ and let $\Delta \in T_U\spst(2n,2k)$ be a tangent vector and let $\bar\Omega(\Delta)$ be as in \eqref{eq:hor_lift_omega_eta}.
The Christoffel symbols $\Gamma_g$ with respect to the right-invariant metric $g$ of \eqref{eq:rie_metric} on $\spst(2n,2k)$ are given by
\begin{equation}\label{eq:gamma_g_same_input}
    \Gamma_g(\Delta,\Delta)=-(\bar\Omega(\Delta)-\bar\Omega(\Delta)^T)(\Delta+\bar\Omega(\Delta)^TU)-(\bar\Omega(\Delta)^T)^2U.
\end{equation}
The Hessian at $U$ of a smooth scalar function $f: \spst(2n,2k) \to \R$ under the metric \eqref{eq:rie_metric} is the endomorphism $\hess f(U): \T_U\spst(2n,2k) \to \T_U\spst(2n,2k)$,
\begin{equation*}
   \hess f(U)[\Delta]=\eval{\dv{t}}_{t=0}\grad f(c(t))+\Gamma_g(\grad f(U),\Delta),
\end{equation*}
where $c:t\mapsto c(t)$ is an arbitrary curve on $\spst(2n,2k)$ such that $c(0)=U, c'(0)=\Delta$ and 
$\eval{\dv{t}}_{t=0}\grad f(c(t))$ is the (standard) derivative of the Riemannian gradient of $f$.
\end{lemma}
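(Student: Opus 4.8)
The plan is to read $\Gamma_g$ off the geodesic differential equation using the extrinsic Christoffel formalism recalled in Appendix~\ref{sec:appendix}, and then to assemble the Hessian from the identity linking the Riemannian Hessian, the Levi--Civita covariant derivative of $\grad f$, and $\Gamma_g$.

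First I would recall that $\Gamma_g$ is the symmetric bilinear map $\T_U\spst(2n,2k)\times\T_U\spst(2n,2k)\to\R^{2n\times 2k}$ characterized by the property that a curve $\varphi$ is a $g$-geodesic precisely when $\varphi''(t)+\Gamma_g(\varphi'(t),\varphi'(t))=0$ for all $t$; this is \eqref{eq:geod_christ_rel}. Since the right-invariant metric admits the explicit geodesics \eqref{eq:geod_spst}, I take $\varphi(t)=\exp(t(\Omega-\Omega^T))\exp(t\Omega^T)U$ with $\Omega=\bar\Omega(\Delta)$; one checks $\varphi(0)=U$ and $\varphi'(0)=\Omega U=\Delta$ (using $\bar\Omega(\Delta)U=\Delta$), so evaluating the geodesic equation at $t=0$ gives $\Gamma_g(\Delta,\Delta)=-\varphi''(0)$. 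The quantity $\varphi''(0)$ is exactly the second derivative computed right before the statement, $\varphi''(0)=(\Omega-\Omega^T)(\Delta+\Omega^TU)+(\Omega^T)^2U$, and inserting this back yields \eqref{eq:gamma_g_same_input}. Since $\Delta\mapsto\bar\Omega(\Delta)$ is linear by \eqref{eq:hor_lift_omega_eta}, the right-hand side is a genuine quadratic form in $\Delta$, so the bilinear $\Gamma_g$ on arbitrary pairs is recovered by polarization, consistently with the symmetry of the Levi--Civita Christoffel form.

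For the Hessian I would invoke the standard facts that $\hess f(U)[\Delta]=(\nabla_\Delta\grad f)(U)$ for the Levi--Civita connection $\nabla$, and that $(\nabla_\Delta\grad f)(U)=\tfrac{D}{dt}\big|_{t=0}\grad f(c(t))$ for \emph{any} curve $c$ with $c(0)=U$, $c'(0)=\Delta$, the value being independent of $c$. In the extrinsic representation the covariant derivative of a tangent vector field $Z$ along $c$ is $\tfrac{D}{dt}Z(t)=\dot Z(t)+\Gamma_g(\dot c(t),Z(t))$ (Appendix~\ref{sec:appendix}); applying this to $Z(t)=\grad f(c(t))$ at $t=0$ gives $\hess f(U)[\Delta]=\tfrac{d}{dt}\big|_{t=0}\grad f(c(t))+\Gamma_g(\Delta,\grad f(U))$, and swapping the arguments of $\Gamma_g$ by symmetry gives the claimed formula. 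One also notes $\tfrac{D}{dt}\big|_{t=0}\grad f(c(t))$ is tangent, so $\hess f(U)$ is indeed an endomorphism of $\T_U\spst(2n,2k)$.

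The main obstacle is bookkeeping rather than conceptual: one must ensure that the geodesic formula \eqref{eq:geod_spst} realizes every initial velocity $\Delta\in\T_U\spst(2n,2k)$, so that $\Gamma_g(\Delta,\Delta)$ is determined for all $\Delta$, and that the appendix's formalism legitimately identifies $-\varphi''(0)$ with the Christoffel form of the \emph{non-Euclidean} metric $g$ (so the second fundamental form alone does not suffice). Both points are handled by the quotient-space construction and the appendix; the only real computation, the second derivative of $\varphi$, has already been carried out above.
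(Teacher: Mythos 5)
Your proposal is correct and follows essentially the same route as the paper: the Christoffel form is read off as $-\varphi''(0)$ from the explicit geodesic \eqref{eq:geod_spst} via the geodesic equation \eqref{eq:geod_christ_rel}, and the Hessian formula then follows from the covariant-derivative representation \eqref{eq:icov_formula} together with \eqref{eq:rie_hess_def}. Your additional checks (that $\varphi'(0)=\bar\Omega(\Delta)U=\Delta$, that bilinearity/polarization recovers $\Gamma_g$ on arbitrary pairs, and that the symmetry of $\Gamma_g$ justifies the argument order) only make explicit what the paper leaves implicit.
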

Considering the general formula for the Riemannian gradient in \eqref{eq:grad_spst} one can write down an expression for $\eval{\dv{t}}_{t=0}\grad f(c(t))$, which also features in our implementation.

A symbolic formula for the Christoffel symbols for general inputs $\Gamma_g(\Delta,\tilde\Delta)$ can be obtained by applying the polarization identity \eqref{eq:polar}, but this does not lead to a computational reduction.

%

Practical tests revealed that it is numerically beneficial to perform a certain normalization procedure. By definition, and also readily seen in eq. \eqref{eq:christ_form}, the Christoffel form is bilinear, and thus we can write 
\begin{equation*}
    \Gamma_g(\Delta,\Delta)=\|\Delta\|_F^2\Gamma_g\qty(\frac{\Delta}{\|\Delta\|_F}\frac{\Delta}{\|\Delta\|_F}).
\end{equation*}
and use this when we compute $\Gamma_g(\grad f(U),\Delta)$ via the polarization identity. It is our experience that this stabilizes the computation significantly, and ensures that $\hess f(U)[\Delta]\in \T_U \spst(2n,2k)$, i.e., the the output of the Hessian stays on the tangent space numerically.

The Riemannian Hessian is computationally costly to evaluate, relative to the Riemannian gradient and other quantities, used in practical algorithms. This is is seen by noting that $\bar\Omega(\Delta)\in \R^{2n\times 2n}$, and as the formulas do not reduce due to nonlinearity, there seems to be little room for improvement, unless one works with numerical approximations.

\subsection{Approximation of the Riemannian Hessian}
The optimization literature knows a large variety of methods for incorporating approximate second-order information without calculating the full Hesse operator. As just one example, we remind the reader of the various forms (Polak-Ribi\'ere, Fletcher-Reeves, etc.) for computing the $\beta$-coefficients in nonlinear CG.
In the special case of a Riemannian submanifold endowed with the Eucludean metric, the Riemannian Hesse 1--1 form of a smooth map $f:\M\to\R$ is obtained from a projection, \cite[Corrolary 5.16]{NBoumal}
\begin{equation}\label{eq:ehess_to_rhess_euc}
    \hess f(x)[\Delta]=P_x\qty(\D \overline \grad f (x)[\Delta]), \ \ \Delta\in \T_x\M,
\end{equation}
where $\overline \grad f$ is a smooth extension of $\grad f$. 
Obviously, \eqref{eq:ehess_to_rhess_euc} is cheaper to obtain than the Hessian operator associated with the right-invariant metric, while it still can be expected to encode valuable second-order information.
In the experiments we will consider it as an approximation of the true Riemannian Hessian.

\section{Optimzation algorithms on Riemannian manifolds}\label{sec:optimization}

In this section, we consider problems from numerical linear algebra which all can be approached via Riemannian optimization methods on $\spst(2n,2k)$. 
Our goal is to conduct a performance study for the Riemannian variants of the steepest descend method {\bf (R--SD)}, the nonlinear conjugate gradients method {\bf (R--CG)} and the trust region method {\bf (R--TR)}. We have chosen to leave out Newton's method, as it would require to solve large equation systems with the Hessian as the operator. We also tested the Riemannian limited--memory BFGS method \cite{Huang2016}, which is implemented in the Manopt package for Matlab \cite{manopt}, which is a general purpose toolbox for Riemannian optimization. This method avoids to compute the true Hessian, but rather it relies on a special approximation. In our initial tests, we did not see the BFGS method to be competitive when compared with the other three methods under consideration. We believe that much customization would be required. Thus we chose to not include it in our experiments.

\subsection{Riemannian steepest descend and nonlinear conjugate gradients methods}

Let $\M$ is a Riemannian manifold. The Riemannian steepest descend and nonlinear conjugate gradients methods on $\M$ are straightforward generalizations of their Euclidean counterparts. We state them in a combined form in Algorithm \ref{alg:cg_nonlin_M}. Here and throughout, we write $f_k:=f(x_k)$ for the objective function value at iteration $k$.  
\begin{algorithm}
    \caption{The Riemannian nonlinear conjugate gradients method with restart \cite{Sato_22_article}}    
    \label{alg:cg_nonlin_M}
    \begin{algorithmic}[1]
        \REQUIRE Objective function $f:\M\to \R$, vector transport $\mathcal{T}^{(k)}$ , initial guess $x_0\in \M$, retraction $R:\T \M\to \M$,  $\mu\in \N$ and initial search direction $p_0=-\grad f_0$
        \FOR{$k=1,2,\dots$}
            \STATE Calculate $\tau_k=\argmin_{\tau>0}f(R_{x_k}(\tau p_k))$
            \STATE Set $x_{k+1}=R_{x_k}(\tau_kp_k)$
            \STATE Compute $\beta_{k+1}$ as for example $\beta_{k+1}=\frac{g_{x_{k+1}}(\grad f_{k+1},\grad f_{k+1})}{g_{x_k}(\grad f_k,\grad f_k)}$ (Fletcher--Reeves)
            \IF{$k=\mu z, z\in \N$}
               \STATE $p_k=-\grad f(x_k)$
            \ELSE
                \STATE $p_{k+1}=-\grad f_{k+1}+\beta_{k+1}\mathcal{T}^{(k)}(p_k)$ where $\mathcal{T}^{(k)}:\T_{x_k}\M\to \T_{x_{k+1}}\M$ is isometric
            \ENDIF
        \ENDFOR 
        
        \end{algorithmic}
\end{algorithm}

Well--established theory on the convergence of Algorithm \ref{alg:cg_nonlin_M} for both the Euclidean and the Riemannian setting exists, and the reader can consult e.g. \cite[Chapter 5]{Noc_Wri} for the Euclidean variant and \cite{Sato_22_article} for the Riemannian variant. If line 4 of Algorithm \ref{alg:cg_nonlin_M} is replaced with $\beta_{k+1}=0, \forall k$, or if $\mu=1$ is chosen, 
the algorithm reduces to R--SD. Convergence theory for R--SD is presented in e.g. \cite[Chapter 4]{NBoumal}. If $\mu>1$ and $\beta_{k+1}\neq 0$, \Cref{alg:cg_nonlin_M} conducts R--CG. In our experiments we take $\beta_{k+1}=\frac{g_{x_{k+1}}(\grad f_{k+1},\grad f_{k+1})}{g_{x_{k}}(\grad f_{k},\grad f_{k})}$, which is the so--called Fletcher--Reeves type of the coefficient $\beta_{k+1}$. It is the introduction of $\beta_{k+1}\neq 0$ which makes R--CG differ from R--SD, because information on the old search direction are used when forming the next one. In the Euclidean setting, linear CG features exact second--order information, because then $\beta_{k+1}$ is a ratio of the Hessian of the objective function at $x_k$ and $x_{k+1}$. In the nonlinear and in the Riemannian setting $\beta_{k+1}$ is a ratio of finite--difference approximations of the Hessian. There exists numerous  choices of $\beta_{k+1}$, and the reader can find a recent overview of these in \cite{Sato_22_article}. To avoid jamming of R--CG, we employ periodic restarts, where the search direction is re-taken as the steepest descend direction, and buildup of bad information and numerical errors is removed. One can use a modified Polak--Rib\`ere version of $\beta_{k+1}$ introduced in \cite{Sato_22_article} to overcome this, but it requires the computation of a vector transport, which  introduces further computational cost. The assumptions one has to impose on R--SD is less restrictive than those one has to impose on R--CG in order to guarantee convergence, and R--CG does usually only converge in the sense that $\liminf_{n\to \infty}\|\grad f(x_k)\|_{x_k}=0$, whereas for R--SD one obtains, under suitable conditions, $\lim_{n\to\infty}\|\grad f(x_k)\|_{x_k}=0$. In general, to ensure convergence theoretically, one imposes, among several regularity conditions, the so--called Riemannian Wolfe conditions on the line search carried out in line 2 of Algorithm \ref{alg:cg_nonlin_M}
\begin{align}
    f(R_{x_k(\tau_kp_k)})&\leq f(x_k)+c_1\tau_k g_{x_k}(\grad f(x_k),p_k) \label{eq:armijo_cond}\\
    g_{x_{k+1}}(\grad f(R_{x_k}(\tau_k p_k)), \mathcal{T}^{(k)}(p_k))&\geq c_2g_{x_k}(\grad f(x_k),p_k),\label{eq:wolfe_2}
\end{align}
or possibly stronger conditions, depending on the choice of $\beta_{k+1}$  \cite{Sato_22_article}. The condition \eqref{eq:armijo_cond} is called the Armijo condition, and in practice we only aim to satisfy that condition, because satisfying both parts of the Wolfe conditions is computationally costly. We stress that in either case, the line search is inexact.  

We impose norm-preservation on the vector transport $\mathcal{T}^{(k)}$, i.e., for $\eta\in \T_{x_k}\M$, $\|\mathcal{T}^{(k)}(\eta)\|_{x_{k+1}}=\|\eta\|_{x_k}$. 
This is in line with the Riemannian parallel transport, which naturally features this property.
In \cite{Sato_22_article}, it is required that the vector transport approximates the directional derivative of the retraction in order to guarantee convergence.

In our numerical set-up, the line search is conducted via the Barzilai--Borwein method {\bf (BB--method)}, which we have adapted from \cite{gao2022optimization}. 
\begin{algorithm}
    \caption{The Barzilai--Borwein line search method. This algorithm carries out step 2 and 3, in each iteration $i$, of Algorithm \ref{alg:cg_nonlin_M}.}
        \label{alg:BB_method}
    \begin{algorithmic}[1]
        \REQUIRE Objective function $f:\M\to \R$ , retraction $R:\T \M\to \M$, search direction $p_i$, $\gamma_0>0$ $0<\gamma_{\min}<\gamma_{\max}$, $\beta,\delta\in (0,1)$, $\alpha\in [0,1]$, $q_0=1$, and $c_0=f(x_0)$
        \IF{$i>0$}
            \STATE Compute
            \begin{equation*}
                \gamma_i=\begin{cases*}
                    \frac{\|W_{i-1}…\|_F^2}{|\tr(W_{i-1}^TY_{i-1})|},& \textnormal{if $i$ odd}\\
                    \frac{|\tr(W_{i-1}^TY_{i-1})|}{\|Y_{i-1}\|_F^2},&\textnormal{if $i$ even}
                \end{cases*}
            \end{equation*} 
            where $W_{i-1}=x_i-x_{i-1}$ are the two last iterates, and $Y_{i-1}=Z_i-Z_{i-1}$, with $Z_j=\grad f(x_j)$.
        \ENDIF
        \STATE Compute the starting guess $\gamma_i=\max\qty{\gamma_{\min},\min\qty{\gamma_i,\gamma_{\max}}}$\\
        \STATE Compute the smallest integer $l$ so that 
        \begin{equation*}
            f(R_{x_i}(\tau_i p_i))\leq f(x_i)+\beta \tau_i g_{x_i}(\grad f(x_i),p_i),
        \end{equation*}
        where $\tau_i=\gamma_i\delta^l$\\
        \STATE Update $x_{i+1}=R_{x_i}(\tau_i p_i)$\\
        \STATE Set $q_{i+1}=\alpha q_i+1$ and $c_{i+1}=\frac{\alpha q_i}{q_{i+1}}+\frac{1}{q_{i+1}}f(x_{i+1})$
        \end{algorithmic}
\end{algorithm}
According to our observations, the BB method is to be preferred over a simple backtracking scheme, since the starting guess at each iteration is selected based on information from the previous iterations. The BB method was also used in other works, eg. \cite{Gao_2021, bz_symplectic,Oviedo_2023}. In the line search, we only verify that the Armijo condition \eqref{eq:armijo_cond} is satisfied. In the experiments, we take $\gamma_0=f(x_0)$ for an initial guess $x_0\in \M$, $\gamma_{\min}=10^{-15}, \gamma_{\max}=10^{15},\beta=10^{-4},\delta=10^{-1},\alpha =0.85$. For R--CG we take $\mu=5$ in Algorithm \ref{alg:cg_nonlin_M}.

\subsection{Riemannian trust--region method}

Like the two preceding algorithms, the Riemannian trust--region method is too a Riemannian generalization of an iterative method for linear spaces. Trust region methods utilize both first-- and second--order information, where at each iteration a quadratic approximation of the objective function $f:\M\to \R$, the so--called subproblem, is minimized over the tangent space $\T_{x_k}\M$
\begin{equation}\label{eq:quadratic_model}
    f(R_{x_k}(\xi))\approx  m_k(\xi)=f(x_k)+g_{x_k}(\grad f(x_k), \xi)+\frac{1}{2}g_{x_k}(H_k(\xi),\xi).
\end{equation}
Here, $\xi\mapsto H_k(\xi)$ is a linear map $H_k:\T_{x_k}\M\to \T_{x_k}\M$ which either can be taken as the Hessian of $f$, $\hess f(x_k)[\xi]$, or as an approximation of the Hessian \cite[Chapter 6]{NBoumal}, \cite{absil_rtr_07}. In our experiments, we work with $H_k(\xi)=\hess f(x_k)[\xi]$, and $\hat H_k(\xi)=P_{x_k}(\D \overline \grad f(x_k)[\xi])$.

The procedure is summarized in Algorithm \ref{alg:r_tr}. It is based on \cite[Algorithm 6.3]{NBoumal} and the implementation in Manopt. The user parameters $Q_0,\bar Q$  and $\rho$ of this algorithm are taken as $\mathcal{Q}_0=\frac{\bar{\mathcal{Q}}}{8}$, $\bar{\mathcal{Q} }=\sqrt{\dim \M}$ and $\rho'=0.1$. Solving the subproblem in line 2 is done via the so--called truncated conjugate gradients method, which is an optimized conjugate gradients method for trust region methods. We will not go into details, but these can be found in \cite[Chapter 6]{NBoumal} or \cite{absil_rtr_07} and references therein. We use the standard parameters provided in the manopt implementation.
\begin{algorithm}
    \caption{The Riemannian trust--region method \cite{NBoumal}}
        \label{alg:r_tr}
    \begin{algorithmic}[1]
        \REQUIRE Objective function $f:\M\to \R$, initial guess $x_0\in \M$, retraction $R:\T \M\to \M$, maximal radius $\bar{\mathcal{Q}}>0$, starting radius $\mathcal{Q}_0\in(0,\bar{\mathcal{Q}}]$ and ratio--threshold $\rho'>0$.
        \FOR{$k=1,2,\dots$}
            \STATE Compute  $\xi_k=\argmin_{\xi\in \T_{x_k}\M, \|\xi\|_{x_k}\leq \mathcal{Q}_k}m_k(\xi)$, with $m_k$ as in \eqref{eq:quadratic_model}\\
            \STATE Compute $\hat x = R_{x_k}(\xi_k)$ \\
            \STATE Compute 
            \begin{equation*}
                \rho_k=\frac{f(x_k)-f(\hat x)}{m_k(0)-m_k(p_k)}
            \end{equation*}
            \\
            \STATE If $\rho<\rho'$ take $x_{k+1}=\hat x$, otherwise $x_{k+1}=x_k$\\
            \STATE Update the trust--region radius 
            \begin{equation*}
                \mathcal{Q}_{k+1}=\begin{cases}
                    \frac{1}{4}\mathcal{Q}_k, &\textnormal{if }\rho_k<\frac{1}{4}\\
                    \min\qty{2\mathcal{Q}_k,\bar{\mathcal{Q}}},&\textnormal{if }\rho_k>\frac{3}{4}\textnormal{ and }\|p_k\|_{x_k}=\mathcal{Q}_k\\
                    \mathcal{Q}_k,&\textnormal{otherwise}
                \end{cases}
            \end{equation*}
        \ENDFOR
        \end{algorithmic}
\end{algorithm}
\subsection{Implementation}

The R--SD and R--CG method utilize the (BB method) in Algorithm \ref{alg:BB_method} for line searching. For R--TR we use the implementation provided in the Manopt package. 

We have implemented the symplectic Stiefel manifold in the Manopt framework in order to use the built--in features of the package for optimization. The symplectic Stiefel manifold is not available in the Manopt package for Matlab, but it is available in a package for Julia, Manifolds.jl \cite{Manifolds_jl}. 
The retraction, vector transport, metric, norm etc. have all been implemented according to the theory outlined in Section \ref{sec:background}. We use this structure to perform numerical experiments, since we use the Manopt implementation of R--TR. We also leverage this structure when testing our manual implementation of R--SD and R--CG. 

The algorithms terminate when $\|\grad f(x_k)\|_{x_k}<10^{-6}$, or when the step size $\tau_k$ in R--SD or R--CG becomes smaller than $10^{-11}$.

We have made minor changes to the script \texttt{trustregions.m} from Manopt package to make our manifold implementation compatible with the Manopt framework. All files are available at \url{https://github.com/JensenRasmus/secondorder_spst}.

\section{Numerical experiments}\label{sec:num_exp}

For the computational competition, we consider the following three problems from numerical linear algebra:
\begin{enumerate}
    \item The nearest symplectic matrix problem \cite{bz_symplectic}.
    \item The symplectic eigenvalue problem \cite{Son_21}.
    \item The proper symplectic decomposition \cite{bz_symplectic,bendokat_geom_22}.
\end{enumerate}
All experiments are conducted with Matlab 2023b on a MacBook Air 2023 M2 with 16 GB ram and 512 GB SSD. The experiments are conducted under the fixed random seed \texttt{mt19937ar}. For the last iterate at each run $x^*$, we compute the feasibility measure, $\|(x^*)^+x^*-I_{2k}\|_F$, which quantifies numerically if the solution resides on $\spst(2n,2k)$. 
The value should be close to $0$. Otherwise, the algorithm is compromised by numerical instability. 

We consider R--TR in two variants: R-TR1 uses the true Riemannian Hessian, and R--TR2 uses the approximation given in \eqref{eq:ehess_to_rhess_euc}. We will write R--TR when addressing both variants.

\subsection{The nearest symplectic matrix problem}

We consider the problem of finding a symplectic matrix $U\in\spst(2n,2k)$, such that for a given fixed matrix $A\in \R^{2n\times 2k}$, the objective function $f(U)=\frac{1}{2}\|A-U\|_F^2$ is minimized. We aim to solve the minimization problem
\begin{equation}\label{eq:nearest_spst_prob}
    \min_{U\in \spst(2n,2k)}f(U)=\min_{U\in \spst(2n,2k)}\frac{1}{2}\|A-U\|^2_F.
\end{equation}
The Euclidean gradient and Hessian are
\begin{equation}
    \nabla f(U)=U-A,  \ \ \nabla^2 f(U)[\Delta]=\Delta. 
\end{equation}
The matrix $A$ is generated randomly, and is subsequently normalized, $\frac{A}{\|A\|_F}$. We take $n=1000
$ and $k=10, 50, 100$. The results of the various optimization approaches are presented in Table \ref{tab:nearest_symp_prob}. 
\begin{table*}\centering
\small
    \ra{1,3} 
    \begin{tabular}{@{}rrrrrr@{}}\toprule
        & \multicolumn{5}{c}{R--SD} \\
\cmidrule{2-6}
& Num.iter. & Time (s) & $\|\grad f(x^*)\|_{x^*}$ & Feasibility of $x^*$&$f(x^*)$ \\ \midrule 
$k = 10$ & 68 & 4.84 & $9.13\cdot 10^{-7}$ & $3.87\cdot 10^{-14}$&8.04 \\
$k = 50$ & 114 & 19.93 & $8.96\cdot 10^{-7}$ & $9.66\cdot 10^{-14}$& 46.04\\
$k = 100$& 129 & 45.11 & $7.22 \cdot 10^{-7}$ & $2.19\cdot 10^{-13}$&94.60\\
& \multicolumn{5}{c}{R--CG} \\
\cmidrule{2-6}
& Num.iter. & Time (s) & $\|\grad f(x^*)\|_{x^*}$ & Feasibility of $x^*$&$f(x^*)$\\ \midrule 
$k = 10$ & 192 & 27.07 & $6.53\cdot 10^{-7}$ & $2.29\cdot 10^{-14}$ &8.04\\
$k = 50$ & 227 & 75.30 & $9.88\cdot 10^{-7}$ & $2.83\cdot 10^{-14}$&46.04 \\
$k = 100$ & 288 & 177.19 & $8.45\cdot 10^{-7}$ & $6.36\cdot 10^{-14}$&94.60\\
& \multicolumn{5}{c}{R--TR1} \\
\cmidrule{2-6}
& Num.iter. & Time (s) & $\|\grad f(x^*)\|_{x^*}$ & Feasibility of $x^*$&$f(x^*)$\\ \midrule 
$k = 10$ & 9 & 17.49& $2.28\cdot 10^{-12}$ & $3.36\cdot 10^{-15}$ &8.04\\
$k = 50$ & 10 & 39.49 & $8.51\cdot 10^{-12}$ & $4.39\cdot 10^{-15}$&46.04 \\
$k=100$ & 12 & 50.58 & $2.48\cdot 10^{-7}$ & $6.60\cdot 10^{-15}$&94.60 \\ 
& \multicolumn{5}{c}{R--TR2 } \\
\cmidrule{2-6}
& Num.iter. & Time (s) & $\|\grad f(x^*)\|_{x^*}$ & Feasibility of $x^*$&$f(x^*)$\\ \midrule 
$k = 10$ & 11 & 7.97& $4.07\cdot 10^{-10}$ & $4.23\cdot 10^{-15}$ &8.04\\
$k = 50$ & 10 & 19.72 & $4.53\cdot 10^{-8}$ & $1.17\cdot 10^{-14}$&46.04 \\
$k=100$ & 12 & 36.27 & $1.67\cdot 10^{-8}$ & $1.80\cdot 10^{-14}$&94.60 \\ 
\bottomrule
    \end{tabular}
    \vspace{0.2cm}
    \caption{Results of applying R--SD, R--CG and R-TR on the problem \eqref{eq:nearest_spst_prob} for $k=5,10,20,30$. The numerical optimum $x^*$ is from the last iterate at which numerical convergence was detected. R--SD  outperforms R--CG and R--TR1, when considering the runtime, but the numerical accuracy and feasibility is best for R--TR1. R--TR1 is superior to R--CG with respect to runtime and feasibility. R--TR2 outperforms all the methods with respect to runtime with the exception of R--SD for $k=10$, while maintaining reasonable feasibility.}    \label{tab:nearest_symp_prob}
\end{table*}
From the table it is seen that R--TR2  outperforms R--SD except for $k=10$, and outperforms R--CG and R--TR1 in terms of the wallclock runtime in all the experiments.
The iteration count of R--CG is largest,
and each iteration is more costly than for to R--SD, since the vector transport needs to be computed. As the line search is neither guaranteed nor expected to be exact or to fulfill both of the Wolfe conditions \eqref{eq:armijo_cond}--\eqref{eq:wolfe_2}, there is also no theoretical  reason to expect superior convergence rates over R--SD. The lowest iteration count by far is observed for the R--TR methods.
It also exhibits a smaller total runtime than R--CG, and achieves the highest numerical accuracy, both with respect to the zero-gradient condition and the preservation of the symplectice structure as measured by the feasibility.
However, R--TR1 is not superior to R--SD in terms of the runtime, but R--TR2 is.

\subsection{The symplectic eigenvalue problem} 
\label{sec:num_exp_symp}
This example follows \cite{Gao_2021}. The goal is to solve
\begin{equation}\label{eq:sympl_eig_val}
    \min_{X\in\spst(2n,2p)}f(X)=\min_{X\in\spst(2n,2p)}\tr(X^TAX).
\end{equation}
This objective function is a special case of the Brockett cost function \cite{brockett_89}
\begin{equation*}
    b(X)=\tr(X^TAXN-2BX^T), \quad \text{with } A \text{ sym. pos. def.}, B=0,N=I\in \R^{n\times n}.
\end{equation*}
Minimizing \eqref{eq:sympl_eig_val} is of interest when solving the symplectic eigenvalue problem \cite[Theorem 4.1]{Son_21}.
The minimizer $X^*\in \spst(2n,2p)$ spans a symplectic subspace, and can be used to compute the trailing symplectic eigenvalues and associated symplectic eigenvectors of $A$. It holds that the smallest $p$ symplectic eigenvalues $d_1,\dots,d_p$ are related to the true minimizer $X_{\textnormal{opt}}$ via
\begin{equation*}
    2\sum_{i=1}^p d_i=f(X_{\textnormal{opt}}).
\end{equation*}
For more details, including algorithms, see \cite{Son_21}.

The Euclidean gradient and Hessian of \eqref{eq:sympl_eig_val} are
\begin{equation}
    \nabla f(X)=2AX, \ \ \nabla^2f(X)[\Delta]=2A\Delta.
\end{equation}
From Williamson's theorem \cite{williamson}, we have that any symmetric positive definite matrix $A\in \R^{2n\times 2n}$ admits the decomposition
\begin{equation*}
    S^TAS =\begin{pmatrix}
        D&0\\
        0&D
    \end{pmatrix},
\end{equation*}
where $D=\textnormal{diag}\qty{\mu_1,\dots,\mu_n}$ are the symplectic eigenvalues. These coincide with the positive imaginary parts of the eigenvalues of $JA$. 

We perform the following numerical experiment inspired by \cite{gao2022optimization, Son_21}. 
Let $D=\textnormal{diag}\qty{1,2,\dots,n}\in \R^{n\times n}$, and $S=J_{2n}KL(l,c,d)$, where $K=\begin{pmatrix}
    \Re(U)&-\Im(U)\\
    \Im(U)&\Re(U)
\end{pmatrix}$ with $U\in \C^{n\times n}$ being a unitary matrix obtained from the Q--factor in the QR--decomposition of a random complex matrix. The $\R^{2n\times 2n}$--matrix $L(l,c,d)$ is  
\begin{equation*}
    L(l,c,d)=\begin{pmatrix}
        I_{l-2}& & & & & & & & \\
               & c & & & & & d & \\
               & & c & & & d& & \\
               & & & I_{n-l}& & & &\\
               & & & & I_{l-2} & & & \\
               & & & & & c^{-1} & &   \\
               & & & & & & c^{-1} & \\
               & & & & & & & I_{n-l}
    \end{pmatrix},
\end{equation*}
and is called symplectic Gauss transformation. Finally, define $A=S\begin{pmatrix}
    D&0\\
    0&D
\end{pmatrix}S^T$. We take $p = 5$ in \eqref{eq:sympl_eig_val}, and thus a true minimizer features an objective function value of $f(X_{\textnormal{opt}})=30$. We employ the algorithms \cite[Algorithms 5.2 and 3.1]{Son_21} to numerically recover the (known) smallest 5 symplectic eigenvalues $A$.
For this purpose, the optimizer of \eqref{eq:sympl_eig_val} is used. 

The optimization results are displayed in Table \ref{tab:sympl_eig}.
All three methods yield a numerical solution of acceptable accuracy in terms of the objective function value, but R--SD and R--CG reached a point where the step size became smaller than $10^{-11}$.
This is an alternative stopping criterion in the implementation, which results in the algorithm terminating before the gradient stopping criterion was reached. The runtime is smallest for R--TR2, and largest for R--CG. Considering the norm of the gradient for the last iterate, we see that R--CG has not properly converged. We see that the feasibility violation is smallest for R--TR, and for R--CG it is by far the largest, which is most likely related to the high iteration count.

The symplectic eigenvalues that are computed with the numerical optimizer $x^{*}$ are presented in Table \ref{tab:sympl_eig_vals}.
The worst case absolute error occurs for the eigenvalue $\lambda = 5$ for all methods and is of the order $10^{-12}$ for R--SD, $10^{-8}$ for R--CG, and $10^{-14}$ for R--TR. This is consistent with the numerical accuracy displayed in Table \ref{tab:sympl_eig}.
\begin{table*}\centering
\small
    \ra{1,3} 
    \begin{tabular}{@{}rrrrrr@{}}\toprule
                & Num.iter. & Time (s) & $\|\grad f(x^*)\|_{x^*}$ & Feasibility  & $|f(x^*)-f(X_{\textnormal{opt}})|$\\ \midrule 
R--SD $^\dagger$ & 1148 & 81.59  & $9.70\cdot 10^{-6}$ & $8.98\cdot 10^{-14}$ & $3.95\cdot 10^{-12}$ \\
R--CG $^\dagger$& 4325  & 616.75 & $9.56\cdot 10^{-3}$ & $4.05\cdot 10^{-13}$ & $5.38\cdot 10^{-8}$\\
R--TR1& 41    & 221.73 & $1.33 \cdot 10^{-11}$ & $2.00\cdot 10^{-15}$ & $1.78\cdot 10^{-14}$\\
R--TR2& 20    & 76.74 & $2.80 \cdot 10^{-10}$ & $2.76\cdot 10^{-15}$ & $7.11\cdot 10^{-14}$\\
  \bottomrule
    \end{tabular}
    \vspace{0.2cm}
    \caption{Results of applying R--SD, R--CG and R--TR on the problem \eqref{eq:sympl_eig_val}. The last iterate obtained at each experiment is $x^*$, and $f(x_{\textnormal{opt}})=30$. R--TR2 outperforms R--SD, R--CG and R--TR1 with respect to computation time.  R--CG terminates at a comparably large gradient norm. The feasibility of the result obtained from R--TR1 and R--TR2 is better than the one from R--SD and R--CG. $\dagger$: The algorithm terminated due to the step size being smaller than $10^{-11}$.}   \label{tab:sympl_eig}
\end{table*}
\begin{table*}\centering
\small
    \ra{1,4} 
    \begin{tabular}{@{}rrrrr@{}}\toprule
                & R--SD. & R--CG  & R--TR1 & R--TR2 \\ 
                1.&$1.000000000000044$ & $1.000000000006140$ & $1.000000000000037$ & $1.000000000000015$\\
                2. & $2.000000000000119$ & $2.000000000076801$ & $2.000000000000014$ & $2.000000000000045$\\
                3. & $3.000000000000211$ & $3.000000000186504$ & $3.000000000000015$ & $3.000000000000023$\\ 
                4. & $4.000000000000287$ & $4.000000000330795$ & $4.000000000000025$ & $3.999999999999970$\\
                5. & $5.000000000001303$ & $5.000000026306411$ & $4.999999999999932$ & $4.999999999999988$ \\
  \bottomrule
    \end{tabular}
    \vspace{0.2cm}
    \caption{The computed symplectic eigenvalues resulting from using the last iterate $x^*$ obtained from R--SD, R--CG and R--TR. R--TR is overall seen to produce the most exact symplectic eigenvalues. }    \label{tab:sympl_eig_vals}
\end{table*}

\subsection{The proper symplectic decomposition} 
Computing the proper symplectic decomposition (PSD) of a matrix $S\in \R^{2n\times 2m}$ is a central task in symplectic model order reduction of Hamiltonian systems \cite{Hesthaven17, bendokat_geom_22, Buchfink_22}. The problem can be seen as the symplectic analog to the proper orthogonal decomposition from classical snapshot--based model order reduction.
The latter is obtained from a (truncated) singular value decomposition of the data matrix \cite{Pinnau2008}. The PSD problem is 
\begin{equation}\label{eq:psd}
    \min_{U\in \spst(2n,2k)}f(U)=\min_{U\in \spst(2n,2k)}\|S-UU^+S\|^2_F.
\end{equation}
In practical applications, usually $k\ll n$. The gradient and Hessian are 
\begin{align}
    \nabla f(U)=&-2\qty(\qty(I_{2n}-UU^+)SS^TJ_{2n}^T U J_{2k}-J_{2n}SS^T(I_{2n} - UU^+)^TUJ_{2k} ),\notag \\
    \nabla^2 f(U)[\Delta]=&-2\qty(A(U,\Delta)SS^TJ_{2n}UJ_{2k}+(I_{2n}-UU^+)SS^TJ_{2n}^T\Delta J_{2k})\\
    &+2\qty(J_{2n}SS^TA(U,\Delta)^TUJ_{2k}+J_{2n}SS^T(I_{2n}-UU^+)\Delta J_{2k})\notag,
\end{align}
where $A(U,\Delta)=\D (I-UU^+)[\Delta]=-\Delta U^+-(\Delta U^+)^+$. In the implementation, we have ensured to store and recycle computations of the numerous matrix--matrix products appearing in the expressions for the gradient and the Hessian. We generate a matrix of rank $2r, r=40$, $T \in \spst(2n,2r)$, generate another random matrix $C\in \R^{2r\times 2m}$, normalize it $C=\frac{C}{\|C\|_F}$, and set $S=TC$. We consider unknowns $U\in \spst(2n,2k)$ for $k=10,20,40$. When $k=r=40$ we expect the  minimizer $x^*$ to fulfill $f(x^*)\approx 0$, where clearly $x^*=T$ is a solution.

In Table \ref{tab:psd}, the results show that R--SD outperforms R--CG and R--TR in terms of the runtime, and R--CG is faster than R--TR1 for $k=10$ and $k=40$, but R--TR2 is faster than R--CG in all runs.   For R--SD and R--CG the iteration count and runtime go up when stepping from $k=10$ to $k=10$, but go down for $k=40$. This might be explained in part by the fact that $k=40$ corresponds to the rank of the true solution.

 The numerical feasibility (i.e. solutions staying symplectic) is best for R--TR with R--SD ranking second. For R--CG, it is the lowest by one order of magnitude, but under R--CG, the most iterations and thus the most numerical operations are performed.
 
 The objective function values follow the expected pattern: When $k$ increases, the projection error decreases. For $k=40$, perfect recovery is possible. The output of R--TR comes closest to the ideal projection error of $0$; for R--SD and R--CG the projection error is acceptable but it is four orders of magnitude larger than that of R--TR.
\begin{table*}\centering
\small
    \ra{1,3} 
    \begin{tabular}{@{}rrrrrr@{}}\toprule
        & \multicolumn{5}{c}{R--SD} \\
\cmidrule{2-6}
& Num.iter. & Time (s) & $\|\grad f(x^*)\|_{x^*}$ & Feasibility of $x^*$ & $f(x^*) $ \\ \midrule 
$k = 10$ & 550 & 199.08 & $9.15\cdot 10^{-7}$ & $4.49\cdot 10^{-14}$ &0.127\\
$k = 20$ & 770 & 349.07 & $9.91\cdot 10^{-7}$ & $7.90\cdot 10^{-14}$ & $0.050$\\
$k = 40$& 112 & 58.85 & $9.06 \cdot 10^{-7}$ & $5.08\cdot 10^{-14}$&$3.15\cdot 10^{-9}$\\
& \multicolumn{5}{c}{R--CG} \\
\cmidrule{2-6}
& Num.iter. & Time (s) & $\|\grad f(x^*)\|_{x^*}$ & Feasibility of $x^*$ & $f(x^*)$\\ \midrule 
$k = 10$ & 1186 & 618,25 & $8.87\cdot 10^{-7}$ & $1.12\cdot 10^{-12}$ &$0.127$\\
$k = 20$ & 2527 & 1496.90 & $7.71\cdot 10^{-7}$ & $1.18\cdot 10^{-13}$&$0.051$ \\
$k = 40$ & 418 & 291.55 & $9.63\cdot 10^{-7}$ & $1.91\cdot 10^{-13}$ & $3.53\cdot 10^{-9}$\\
& \multicolumn{5}{c}{R--TR1} \\
\cmidrule{2-6 }
& Num.iter. & Time (s) & $\|\grad f(x^*)\|_{x^*}$ & Feasibility of $x^*$ &$f(x^*) $ \\ \midrule 
$k = 10$ &48 & 889.77& $4.81\cdot 10^{-7}$ & $9.49\cdot 10^{-15}$&$0.127$ \\
$k = 20$ & 54 & 1469.26 & $3.53\cdot 10^{-7}$ & $1.28\cdot 10^{-14}$&$0.053$ \\
$k=40$ & 20 & 1064.06 & $6.40\cdot 10^{-7}$ & $1.52\cdot 10^{-14}$&$2.38\cdot 10^{-13}$ \\ 
& \multicolumn{5}{c}{R--TR2} \\
\cmidrule{2-6 }
& Num.iter. & Time (s) & $\|\grad f(x^*)\|_{x^*}$ & Feasibility of $x^*$ &$f(x^*) $ \\ \midrule 
$k = 10$ &31 & 281.15& $6.11\cdot 10^{-7}$ & $1.60\cdot 10^{-14}$&$0.127$ \\
$k = 20$ & 36 & 496.34 & $6.74\cdot 10^{-7}$ & $2.37\cdot 10^{-14}$&$0.053$ \\
$k=40$ & 12 & 116.57 & $5.56\cdot 10^{-7}$ & $1.42\cdot 10^{-14}$&$4.00 \cdot 10^{-13}$ \\ \bottomrule
    \end{tabular}
    \vspace{0.2cm}
    \caption{Results of applying R--SD, R--CG and R--TR on the problem \eqref{eq:psd} for $k=10,20,40$. $x^*$ is the last iterate obtained at each experiment. R--SD outperforms R--CG and R--TR for the experiments at hand, when considering the runtime.}    \label{tab:psd}
\end{table*}

\section{Discussion and conclusion}\label{sec:discussion}


In the numerical experiments conducted in this study, R--TR2 outperformed R--SD, R--CG and R--TR1 with respect to runtime in the nearest symplectic matrix problem and the symplectic eigenvalue problem. With respect to stability and numerical accuracy, R--TR1 and R--TR2 produced the best results. On the one hand, this is as expected, because R--TR makes use of second--order or approximate second--order information. On the other hand, one cannot conclude that R--CG and R--SD suffer more from numerical errors 
 per iteration. The number of iterations for these methods is much larger than that for R--TR, so that  numerical errors accumulate over a longer iteration history. Of course, R--TR could be further improved, if one succeeds in making a single iterations in R--TR computationally more efficient.
 In our experiments, R--TR  always converged properly, whereas for R--SD and R--CG, there was a case, where the step size became so small
 that the algorithm stopped without reaching a point with near-zero gradient.

Even though it approximates second--order information, R--CG does not converge faster than R--SD in our experiments, and the iteration count is large. We believe that this indicates that the line search is not fulfilling the requirements leading to favourable convergence patterns for R--CG. This is in contrast to what the authors of \cite{Yamada_Sato23} observed. We emphasize that we have taken care to ensure that the numerical competition between the methods considered in this study is fair. The comparison with results from other publications should be made with caution due to the different user parameters that occur in optimization algorithms, not least among them the numerical stopping criterion and the target accuracy. 

The retraction used in the numerical experiments approximates the geodesics on the manifold to comparably high accuracy. Yet, to compute this retraction requires more effort than for the simple retraction \eqref{eq:simple_ret}, and the runtime of R--SD observed in this work is generally much slower than the runtime that is reported for R--SD in the works \cite{bz_symplectic,Gao_2021,gao2022optimization} for similar experiments. These works, exclusively or partly, make use of the simple retraction \eqref{eq:simple_ret}. This supports the usual observation that the closeness of the retraction to the Riemannian exponential plays a subordinate role in iterative optimization schemes and that numerical efficiency should prioritized.

The discordance between the the experiments conducted in this work and those appearing in the works \cite{Gao_2021,gao2022optimization, Yamada_Sato23} can also be due to the different choice of Riemannian metrics, as different metrics gives rise to different formulas for the Riemannian gradient.

\appendix 
\section{Differentiation of vector fields on Riemannian manifolds}\label{sec:appendix}
In this appendix we will recall the background theory for the results presented in Section \ref{sec:rie_hess}. These results can be found in any standard textbook on Riemannian geometry, for example \cite{docarmo92}. The interested reader may also consult \cite{NBoumal}.

Let $\M$ be a Riemannian manifold with metric $\inp{\cdot}{\cdot}_x$ for $x\in \M$ and let $\nabla$ denote the Riemannian connection and let $\mathfrak{X}(\M)$ denote the space of vector fields on $\M$. Below we summarize the properties of the Levi--Civita connection which allows to differentiate a vector field along directions indicated by another vector field.

\begin{theorem}[Levi--Civita connection]
    The Levi--Civita connection is the u\-nique $\R$-bilinear smooth map on $(\M, \inp{\cdot}{\cdot}_x)$
    \begin{equation*}
        \nabla: \mathfrak{X}(\M)\times \mathfrak{X}(\M)\to \mathfrak{X}(\M), (X,Y)\mapsto \nabla_X Y
    \end{equation*}
    such that 
    \begin{enumerate}
        \item $\nabla_{hX}Y=h\nabla_X Y$ 
        \item $\nabla_X(hY)=X(h)+h\nabla_X Y$ (Leibnitz--rule)
        \item $\nabla_X Y-\nabla_Y X=[X,Y]$ (torsion free),
        \item $Z\inp{X}{Y}=\inp{\nabla_ZX}{Y}+\inp{X}{\nabla_Z Y}$ (metric)
    \end{enumerate}
    for $X,Y,Z\in \mathfrak{X}(\M)$ and $h\in C^\infty(\M,\R)$.
\end{theorem}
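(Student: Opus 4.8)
The plan is to prove uniqueness and existence together via the \emph{Koszul formula}. For uniqueness, suppose $\nabla$ satisfies (1)--(4) and fix $X,Y,Z\in\mathfrak{X}(\M)$. Write property (4) with each of the three fields in turn playing the role of the differentiating one:
\begin{align*}
  X\inp{Y}{Z} &= \inp{\nabla_X Y}{Z} + \inp{Y}{\nabla_X Z},\\
  Y\inp{Z}{X} &= \inp{\nabla_Y Z}{X} + \inp{Z}{\nabla_Y X},\\
  Z\inp{X}{Y} &= \inp{\nabla_Z X}{Y} + \inp{X}{\nabla_Z Y}.
\end{align*}
Adding the first two equations, subtracting the third, and replacing each difference $\nabla_A B-\nabla_B A$ by $[A,B]$ using the torsion-free property (3), all the connection terms collapse into a single one, yielding
\begin{equation*}
  2\inp{\nabla_X Y}{Z} = X\inp{Y}{Z} + Y\inp{Z}{X} - Z\inp{X}{Y} + \inp{[X,Y]}{Z} - \inp{[X,Z]}{Y} - \inp{[Y,Z]}{X}.
\end{equation*}
The right-hand side does not involve $\nabla$ at all, and since $\inp{\cdot}{\cdot}_x$ is non-degenerate, the values $\inp{\nabla_X Y}{Z}$ for all $Z$ determine the vector field $\nabla_X Y$. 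Hence at most one such $\nabla$ exists.

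For existence, I would reverse this: for fixed $X,Y$ define $\omega\colon\mathfrak{X}(\M)\to C^\infty(\M,\R)$ to be the map sending $Z$ to the right-hand side displayed above. The crucial point is that $\omega$ is $C^\infty(\M)$-linear in $Z$: writing $Z=hZ'$ and expanding with the product rule for directional derivatives together with $[X,hZ']=X(h)Z'+h[X,Z']$ and $[Y,hZ']=Y(h)Z'+h[Y,Z']$, the non-tensorial $X(h)$- and $Y(h)$-contributions cancel against those coming from $Z\inp{X}{Y}$. By non-degeneracy of the metric, $\omega$ is represented by a unique vector field, which I define to be $2\nabla_X Y$; smoothness is inherited from that of the metric and of the Lie brackets. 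Properties (1)--(4) are then read off the formula: $\mathbb{R}$-bilinearity is immediate; substituting $hX$ for $X$ and tracking the extra $X(h)$-terms gives (1); substituting $hY$ for $Y$ gives the Leibniz rule (2); interchanging $X$ and $Y$ in the formula and subtracting gives (3); and writing the formula for both $\nabla_Z X$ and $\nabla_Z Y$ and adding recovers the metric-compatibility (4).

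I expect the uniqueness half to be short once the three cyclic instances of (4) are written down. The main obstacle is the existence half: verifying the $C^\infty(\M)$-linearity of $\omega$ in $Z$, and subsequently checking properties (2) and especially (3) from the Koszul formula, requires careful bookkeeping of the derivative terms against the bracket terms, repeatedly invoking identities such as $[hX,Y]=h[X,Y]-Y(h)X$. The computations are routine but error-prone, so I would organize them by isolating, for each property, precisely which pair of arguments is being symmetrized or antisymmetrized and which Leibniz terms must cancel.
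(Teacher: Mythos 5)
The paper does not actually prove this statement---it is quoted as standard background with a pointer to do Carmo, whose proof is precisely the Koszul-formula argument you give---and both halves of your proposal are correct: the uniqueness derivation (three cyclic instances of metric compatibility combined via torsion-freeness to isolate $2\inp{\nabla_X Y}{Z}$) and the existence construction ($C^\infty(\M)$-linearity of the right-hand side in $Z$ plus non-degeneracy of the metric, followed by verification of (1)--(4)). One incidental payoff of your construction: it yields the Leibniz rule in the form $\nabla_X(hY)=X(h)Y+h\nabla_X Y$, which exposes a typo in property (2) as printed in the statement, where the factor $Y$ is missing from the term $X(h)$.
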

We will make use of the Christoffel formalism indicated in \cite{Edelman}. Briefly; a smooth vector field on $\M$, $x\mapsto V(x)\in \T_x\M$ can be written in the local coordinates as 
\begin{equation*}
    V(x)=\sum_{i=1}^n \alpha_i(x)\partial_i|_x,
\end{equation*} 
where $\partial_1|_x,\dots,\partial_n|_x$ are the canonical basis of the tangent space $\T_x\M$ in the geometric sense. The \textit{Christoffel symbols} are scalar coefficient functions $\Gamma_{i,j}^k:\M\to \R, x\to \Gamma_{i,j}^k(x)$ so that in local coordinates, the covariant derivative of $X(x)=\sum_{i=1}^n \alpha_i(x)\partial_i|_x \in \mathfrak{X}(\M)$ along $Y(x)=\sum_{i=1}^n \beta_i(x)\partial_i|_x\in \mathfrak{X}(\M)$, which is (suppressing the variable $x$) 
\begin{equation*}
    \nabla_XY=\sum_{i=1}^n\sum_{j=1}^n \alpha_i\partial_i(\beta_j)\partial_j+\alpha_i\beta_j\nabla_{\partial_i}\partial_j = 
    \sum_{i=1}^n\sum_{j=1}^n \alpha_i\partial_i(\beta_j)\partial_j+\alpha_i\beta_j
    \sum_{k=1}^n\Gamma_{i,j}^k\partial_k.
\end{equation*} 
In other words, the Christoffel symbols are in some sense `corrector terms', ensuring that $\nabla_XY\in \T_x\M$, whenever $X(x)\in \T_x\M$. We will later be interested in the special case when $X=c'(t)$, for a smooth curve $c:\R\supset I\to \M, t\mapsto(\gamma_1(t),\dots,\gamma_n(t))$ with the associated directional derivative along the coordinate lines, $c'(t)=\sum_{i=1}^n \gamma_i'(t)\partial_i|_{c(t)}$. This is \textit{the covariant derivative along a curve}, and is written as $\icov X(t)=\nabla_{c'(t)}X(t)$. Using the same representation for $X(t)$ as earlier, $\icov X(t)$ is represented in the basis $\partial_1,\dots,\partial_n$ as 
\begin{equation}\label{eq:icov_formula}
    \icov X(t)=\alpha'(t)+\Gamma(\alpha(t),\gamma'(t)),
\end{equation}
where 
\begin{equation}\label{eq:christ_form}
    \Gamma(u,v)=\begin{pmatrix}
        u^T\Gamma^1 v\\
        \vdots\\
        u^T\Gamma^m v
    \end{pmatrix},
\end{equation} 
is the so-called \textit{Christoffel function} and thus $\alpha_k'(t)+\Gamma^k(\alpha(t),\gamma'(t))$ are the coefficient for the basis element $\partial_k$ at $c(t)$. 

If $\gamma$ is a geodesic, then $\icov \gamma'(t)\equiv 0$ and thus \eqref{eq:icov_formula} in this case reduces to 
\begin{equation}\label{eq:geod_christ_rel}
    0=\gamma''(t)+\Gamma(\gamma'(t),\gamma'(t)),
\end{equation}
which is also stated in \cite{Edelman}. What is important is that the Christoffel symbols depend solely on the local coordinates and the Riemannian metric, and thus when then Christoffel function have been identified, we can use it when the curve of interest is not necessarily a geodesic. By applying the polarization identity
\begin{equation}\label{eq:polar}
    \Gamma(\Delta,\xi)=\frac{1}{4}\qty(\Gamma(\Delta+\xi,\Delta+\xi)-\Gamma(\Delta-\xi,\Delta-\xi)),
\end{equation}
we can recover the Christoffel function for two different inputs, as required in the genral formula  \eqref{eq:icov_formula}. 

Finally, for $X(t)=\grad f(c(t))$, $c(0)=x,c'(0)=\Delta$ for a smooth map $f:\M\to \R$, we have that the Riemannian Hessian is computed as 
\begin{equation}\label{eq:rie_hess_def}
    \eval{\icov \grad f(c(t))}_{t=0}=\hess f(x)[\Delta].
\end{equation}

\section{Derivation of the low--rank formula for the retraction \eqref{eq:main_ret}} \label{sec:appendixB}
In this appendix we will derive a low--rank formula for the Cayley retreaction presented in this paper.

We  employ the Sherman--Morrison--Woodbury formula
\begin{equation*}
    (A+UCV)^{-1}=A^{-1}-A^{-1}U(C^{-1}+VA^{-1}U)^{-1}VA^{-1}
\end{equation*}
and consider the retraction as a curve $t\mapsto R_U(t\Delta)$,
\begin{equation}
    R_U(t\Delta)=\cay\qty(\frac{t}{2}\qty(\bar\Omega(\Delta)-\bar\Omega(\Delta)^T))\cay\qty(\frac{t}{2}\bar\Omega(\Delta)^T)U.
\end{equation}
First we tackle the rightmost part of $R_U(t\Delta)$, 
\begin{align}\label{eq:efficient_cay_1}
    \cay\qty(\frac{t}{2}XY^T)U&=\qty(I_{2n}+\frac{t}{2}XY^T)\qty(I_{2n}-\frac{t}{2}XY^T)^{-1}U\notag\\
    &=\qty(I_{2n}+\frac{t}{2}XY^T)\qty(I_{2n}+\frac{t}{2}X\qty(I_{4k}-\frac{t}{2}Y^TX)^{-1}Y^T)U\notag\\
    &=U+\qty(\frac{t}{2}X\qty(I_{4k}-\frac{t}{2}Y^TX)+\frac{t^2}{4}XY^TX+\frac{t}{2}X)\notag \\ 
    &\ \ \ \ \ \ \ \ \ \  \cdot \qty(I_{4k}-\frac{t}{2}Y^TX)^{-1}Y^TU \notag\\
    &=U+tX\qty(I_{4k}-\frac{t}{2}Y^TX)^{-1}Y^TU.
\end{align}
Because $\bar \Delta=UA+H$ (this can be seen similarly to how the authors do in \cite[pp. 13]{bz_symplectic}), we have $A=U^+\bar \Delta, H=\bar \Delta-UA$ and thus one obtains
\begin{align*}
    Y^TX=\begin{bmatrix}
        \frac{1}{2}A&-I_{2k}\\
        H^+H-\frac{1}{4}A^2&\frac{1}{2}A
    \end{bmatrix},
\end{align*}
and thus
\begin{equation}\label{eq:to_be_inverted_schur}
    I_{4k}-\frac{t}{2}Y^TX=\begin{bmatrix}
        I_{2k}-\frac{t}{4}A&\frac{t}{2}I_{2k}\\
        -\frac{t}{2}(H^+H-\frac{1}{4}A^2)&I_{2k}-\frac{t}{4}A
    \end{bmatrix}.
\end{equation} 
To invert the matrix in \eqref{eq:to_be_inverted_schur}, we use the method of block--inversion by Schur complements \cite{schur_and_app},
\begin{equation}\label{eq:schur_block_formula}
    M=\begin{bmatrix}
        \hat A&\hat B\\
        \hat C&\hat D
    \end{bmatrix}\quad \Rightarrow \quad
    M^{-1}=\begin{bmatrix}
        S^{-1}&-S^{-1}\hat B\hat D^{-1}\\
    -\hat D^{-1}\hat CS^{-1}&\hat D^{-1}+\hat D^{-1}\hat CS^{-1}\hat B\hat D^{-1}
    \end{bmatrix},
\end{equation}
where the Schur complement $S$ is with respect to the lower block $\hat D$,
$
    S=\hat A-\hat B\hat D^{-1}\hat C.
$

In the case at hand,
\begin{align*}
    S&=I_{2k}-\frac{t}{4}A+\frac{t^2}{4}\qty(I_{2k}-\frac{t}{4}A)^{-1}\qty(H^+H-\frac{1}{4}A^2)\\
    &=\qty(I_{2k}-\frac{t}{4}A)^{-1}\qty(\qty(I_{2k}-\frac{t}{4}A)^2+\frac{t^2}{4}\qty(H^+H-\frac{1}{4}A^2))\\
    &=\qty(I_{2k}-\frac{t}{4}A)^{-1}\Theta,\quad 
    \text{ where }
    \Theta=\qty(I_{2k}-\frac{t}{2}A+\frac{t^2}{4}H^+H).
\end{align*}

Combined with \eqref{eq:schur_block_formula}, one obtains
\begin{equation*}
    \qty(I_{4k}-\frac{t}{2}Y^TX)^{-1}=\begin{bmatrix}
        \Theta^{-1}\qty(I_{2k}-\frac{t}{4}A)&-\frac{t}{2}\Theta^{-1} \\
        -\frac{2}{t}\qty(\qty(I_{2k}-\frac{t}{4}A)\Theta^{-1}(I_{2k}-\frac{t}{4}A)-I_{2k})&\qty(I_{2k}-\frac{t}{4}A)\Theta^{-1}
    \end{bmatrix},
\end{equation*}
which leads to the formula
\begin{align}\label{eq:eff_cay_small}
    \cay\qty(\frac{t}{2}\Omega(\Delta)^T)U&=U+t\begin{bmatrix}
        (I_{2n}-\frac{1}{2}UU^+)\bar \Delta&-U
    \end{bmatrix}\begin{bmatrix}
        \Theta^{-1}\\
        -\frac{2}{t}\qty(I_{2k}-\frac{t}{4}A)\Theta^{-1}+\frac{2}{t}I_{2k}
    \end{bmatrix}\notag \\
    &=-U+(tH+2U)\Theta^{-1},
\end{align}
For the left Cayley factor in the retraction formula \eqref{eq:main_ret}, we proceed in a similar manner and obtain  
\begin{equation}\label{eq:cay_large}
    \cay\qty(\frac{t}{2}\hat X\hat Y^T)=I_{2n}+t\hat X\qty(I_{8k}-\frac{t}{2}\hat Y^T\hat X)^{-1}\hat Y^T.
\end{equation}
Due to the complicated matrix appearing under the inverse
\begin{equation*}
    \hat Y^T\hat X=\begin{bmatrix}
        X^TY&-X^TX\\
        Y^TY&-Y^TX
    \end{bmatrix},
\end{equation*}
we have not obtained any fruitful results via inversion by Schur compliments, and we therefore accept computing $\qty(I_{8k}-\frac{t}{2}\hat Y^T\hat X)^{-1}$. We therefore have the resulting low--rank formula for the retraction 
\begin{equation*}
    \begin{aligned}
    R_U(t\Delta)=&\qty(I_{2n}+t\hat X\qty(I_{8k}-\frac{t}{2}\hat Y^T\hat X)^{-1}\hat Y^T)\\
    &\cdot \qty(-U+(tH+2U)\qty(I_{2k}-\frac{t}{2}A+\frac{t^2}{4}H^+H)^{-1}).
\end{aligned}
\end{equation*}

\section{Basic calculation rules} Here we mention three useful calculation rules for matrices on $\spst(2n,2k)$, which all follow directly from the definition of the manifold. 
\begin{lemma}\label{lam:c1}
    Let $U\in \spst(2n,2k)$ and $v\in \R^{2n\times 2k}$. The following properties hold 
    \begin{itemize}
        \item $((U^TU)^{-1})^+=((U^TU)^+)^{-1}$
        \item $(v^+U+U^+v)^+=v^+U+U^+v$
        \item $U^+(U^T)^+=(U^TU)^+$
    \end{itemize} 
\end{lemma}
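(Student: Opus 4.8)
The plan is to verify each of the three identities directly from the definitions of the symplectic inverse and the symplectic Stiefel manifold, using only the basic algebraic facts $ (AB)^+ = B^+ A^+ $, $(A^+)^+ = A$ (for square invertible-on-the-relevant-space arguments), $J_{2m}^T = -J_{2m} = J_{2m}^{-1}$, and the defining relation $U^+ U = I_{2k}$. Throughout I write $A^+ = J^T A^T J$ for the appropriately sized $J$'s; for a $2k\times 2k$ matrix this is $J_{2k}^T A^T J_{2k}$, and since $J_{2k}$ is orthogonal the map $A \mapsto A^+$ is a similarity transformation (up to transpose), hence it commutes with taking inverses of invertible matrices and is its own inverse.

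\textbf{First identity.} For $((U^TU)^{-1})^+ = ((U^TU)^+)^{-1}$: set $B = U^TU$, which is symmetric positive definite, hence invertible. It suffices to show $(B^{-1})^+ = (B^+)^{-1}$, i.e.\ that $(\cdot)^+$ and $(\cdot)^{-1}$ commute on invertible matrices. Indeed $B^+ (B^{-1})^+ = (B^{-1}B)^+ = I_{2k}^+ = I_{2k}$ using $I^+ = J^T I^T J = J^T J = I$ and the anti-multiplicativity $(XY)^+ = Y^+ X^+$ (which follows from $(XY)^+ = J^T (XY)^T J = J^T Y^T X^T J = J^T Y^T J J^T X^T J = Y^+ X^+$). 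This pins down $(B^{-1})^+$ as the inverse of $B^+$.

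\textbf{Second identity.} For $(v^+U + U^+v)^+ = v^+U + U^+v$: apply $(\cdot)^+$ termwise using $(XY)^+ = Y^+X^+$ and $(X^+)^+ = X$, giving $(v^+U)^+ = U^+ (v^+)^+ = U^+ v$ and $(U^+v)^+ = v^+ U$. So $(v^+U + U^+v)^+ = U^+v + v^+U$, which is the original expression; the matrix is "$+$-symmetric". The only point to check carefully is $(X^+)^+ = X$ for rectangular $X \in \R^{2n\times 2k}$, where the two $J$'s have different sizes: $(X^+)^+ = (J_{2k}^T X^T J_{2n})^+ = J_{2n}^T (J_{2k}^T X^T J_{2n})^T J_{2k} = J_{2n}^T J_{2n}^T X J_{2k} J_{2k} = (-I_{2n}) X (-I_{2k}) = X$, using $J^T J^T = (J^T)^2 = -I$.

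\textbf{Third identity.} For $U^+ (U^T)^+ = (U^TU)^+$: expand the left side as $U^+ (U^T)^+ = (U^T U)^+$ by anti-multiplicativity $(U^T U)^+ = U^+ (U^T)^+$ — literally this is just the rule $(XY)^+ = Y^+X^+$ with $X = U^T$, $Y = U$, noting $(U^T)^+$ makes sense as $J_{2n}^T (U^T)^T J_{2k} = J_{2n}^T U J_{2k} \in \R^{2n\times 2k}$... wait, the sizes: $U^+ \in \R^{2k\times 2n}$ and $(U^T)^+ \in \R^{2n\times 2k}$ so the product is $2k \times 2k$, matching $(U^TU)^+ \in \R^{2k\times 2k}$. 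So this identity is simply the statement $(U^TU)^+ = U^+(U^T)^+$, an instance of anti-multiplicativity applied to the factorization $U^TU = (U^T)(U)$. The main (and only mild) obstacle across all three parts is bookkeeping the sizes of the various $J$ matrices so that "$+$" is applied in the correct ambient dimension; once that is fixed, each identity is a one-line consequence of $(XY)^+ = Y^+X^+$ and $(X^+)^+ = X$.
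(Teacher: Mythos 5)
Your verification is correct: all three identities follow from the anti-multiplicativity $(XY)^+=Y^+X^+$, the involution $(X^+)^+=X$ (with the $J$'s of matching sizes), and the invertibility of $U^TU$ (full column rank of $U$), which is precisely the direct check the paper has in mind when it states that the lemma ``follows directly from the definition of the manifold'' without writing out a proof. No gaps; the dimension bookkeeping of the various $J_{2m}$ factors, which you handle explicitly, is indeed the only point requiring care.
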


\bibliographystyle{siamplain}
\bibliography{mybib}

\newpage

\end{document}